\documentclass[pandoc, label-section=modern, 12pt, twoside, leqno]{amsart}
\usepackage[top=30truemm,bottom=30truemm,left=20truemm,right=20truemm]{geometry}
\usepackage{mathtools, amssymb, amsthm, fancyhdr, tcolorbox, environ, xcolor, mathrsfs}
\usepackage[inline]{enumitem}
\numberwithin{equation}{section}

\theoremstyle{definition}
\newtheorem{theorem}{Theorem}[section]
\newtheorem{lemma}[theorem]{Lemma}
\newtheorem{proposition}[theorem]{Proposition}

\newtheorem{remark}[theorem]{Remark}

\newcommand{\R}{\mathbb{R}}

\newcommand{\Del}{\Delta}

\mathtoolsset{showonlyrefs=true}

\title[Initial--boundary problem of Zakharov type system]{Global Existence for a Zakharov type system in a domain}
\date{}
\author[N. Kobayashi and M. Ohta]{Nobutatsu Kobayashi$^\dag$ and Masahito Ohta$^\ddag$}
\thanks{\dag Department of Mathematics, Graduate School of Science, 
Tokyo University of Science,
1-3 Kagurazaka, Shinjuku-ku, Tokyo 162-8601, Japan.
Email : 1125702@ed.tus.ac.jp\\
\ddag Department of Mathematics, 
Tokyo University of Science,
1-3 Kagurazaka, Shinjuku-ku, Tokyo 162-8601, Japan.
Email : mohta@rs.tus.ac.jp
}

\begin{document}
\begin{abstract}
We study an initial-boundary value problem for a Zakharov type system in three space dimensions in a general domain. Under a smallness assumption on the initial data, we construct a unique global strong solution. The solution is obtained by a direct approach based on higher-order energy estimates and the construction of a Cauchy sequence in suitable Banach spaces, without employing compactness methods. Furthermore, we obtain estimates on the growth of higher-order Sobolev norms of solutions.

Keywords : Zakharov type system, higher-order Sobolev norms, strong solution, global existence.

2020 Mathematics Subject Classification : 35A01, 35A02, 35B30, 35B45.
\end{abstract}

\maketitle

%Chapter 1
\section{Introduction}
In this paper, we consider the initial-boundary value problem for a Zakharov type system:
\begin{equation}
\begin{cases}
      i\partial_t u=-\Del u+vu,  &(t,x)\in\R\times\Omega,\\
      \partial_t v=-(-\Del)^{1/2} w,  &(t,x)\in\R\times\Omega,\\
      \partial_t w=(-\Del)^{1/2} v+|u|^2,  &(t,x)\in\R\times\Omega,\\
      (u(t,x),v(t,x),w(t,x))=(0,0,0),  &(t,x)\in \R\times\partial\Omega,\\
      (u(0,x),v(0,x),w(0,x))=(u_0(x),v_0(x),w_0(x)), &x\in \Omega,
\end{cases}
\label{eq:(1.1)}
\end{equation}
where $u : \mathbb{R}\times \Omega\to \mathbb{C}$, $v,w : \mathbb{R}\times \Omega\to \mathbb{R}$ are unknown functions, and $\Omega$ is a domain in $\R^3$ with smooth boundary $\partial\Omega$.

The system \eqref{eq:(1.1)} corresponds to the $L^2$-critical case of the following system:
\begin{equation}
	\left\{\begin{alignedat}{3}
      &i\partial_t u=-\Del u+vu,\\
      &\partial_t v=-(-\Del)^{1/2} w,\\
      &\partial_t w=(-\Del)^{1/2} v+(-\Del)^{\frac{1}{2}-\alpha}(|u|^2),
       \end{alignedat}\right.
       \label{eq:(1.1_2)}
\end{equation}
where $0\le\alpha\le 1$.
Differentiating the second equation in \eqref{eq:(1.1_2)} with respect to $t$ and substituting the third equation into it, we obtain the following system:
\begin{equation}
	\left\{\begin{alignedat}{3}
      &i\partial_t u+\Del u=vu,\\
      &\partial_t^2 v-\Del v=-(-\Del)^{1-\alpha}(|u|^2).
      \end{alignedat}\right.
       \label{eq:(1.1_3)}
\end{equation}

We note that when $\alpha=0$, \eqref{eq:(1.1_3)} becomes the Zakharov system:
\begin{equation}
	\left\{\begin{alignedat}{3}
      &i\partial_t u+\Del u=vu,\\
      &\partial_t^2 v-\Del v=\Del(|u|^2),
      \end{alignedat}\right.
       \label{eq_Z}
\end{equation}
which models the propagation of Langmuir waves in an ionized plasma \cite{MR2}, and when $\alpha=1$, \eqref{eq:(1.1_3)} becomes the Schr\"{o}dinger-wave system, which is a model for the interaction between a meson and a nucleon \cite{MR380160}. 

The system \eqref{eq:(1.1_3)} can be regarded as a model leading to a Hartree type equation in the whole space $\R^3$. In fact, if we consider the second equation with a parameter $c$:
\begin{align*}
\frac{1}{c^2}\partial_t^2 v-\Del v=(-\Del)^{1-\alpha}(|u|^2),
\end{align*}
by taking the subsonic limit $c \to \infty$, formally we obtain
\begin{align*}
i\partial_t u-\Del u=c_{\alpha}(|x|^{2\alpha-3}*|u|^2)u,
\end{align*}
where $c_\alpha=\frac{\Gamma(\frac{3-2\alpha}{2})}{\pi^{3/2}2^{2\alpha}\Gamma(\alpha)}$.

Although there are some results on the Cauchy problem for generalized Zakharov systems \eqref{eq:(1.1_3)} in $\mathbb{R}^3$, there are, to the best of our knowledge, no results for general domains. Beck, Pusateri, Sosoe and Wong \cite{MR3403405} proved the existence and uniqueness of global solutions $(u,v)\in C([0,\infty);H^{m+1}(\mathbb{R}^3)\times H^m(\mathbb{R}^3))$ 
to \eqref{eq:(1.1_3)} with small initial data for sufficiently large $m$ and $\alpha\in[0,1/2)$. Moreover, J. Kato and Tojo \cite{kato2024smallenergyscatteringradial} constructed unique global solutions $(u,v)\in C([0,\infty); H^1(\mathbb{R}^3)\times H^{\alpha}(\mathbb{R}^3))$ in the energy space for $\alpha\in[0,1/4]$ under the assumption that the initial data is small and radially symmetric. Both results are based on suitable versions of Strichartz estimates, which are powerful tools in the whole space $\mathbb{R}^N$. By contrast, for general domains, Strichartz estimates, the Bourgain method, and the I-method are not applicable, since Fourier analysis is no longer available. In this case, compactness arguments such as the Galerkin method, the Ascoli--Arzel\'{a} Theorem, the Banach--Alaoglu Theorem and the Rellich--Kondrachev Theorem are useful. On the other hand, the construction of solutions by compactness methods requires complex procedures.

Our aim in this paper is to construct solutions in a more direct and constructive way, independent of any compactness argument and depending only on the completeness of function spaces. We can also establish the uniqueness and continuous dependence by the same procedure. M. Hayashi and Ozawa \cite{MR3548257} proved the existence and uniqueness of local solutions to the generalized derivative nonlinear Schr\"{o}dinger equation.
They constructed the solutions by using energy estimates and proving that the sequence of approximate solutions is a Cauchy sequence in suitable Banach spaces. Furthermore, Ozawa and Tomioka \cite{MR4305957, MR4859330} proved the existence and uniqueness of global solutions to the Klein--Gordon--Schr\"{o}dinger equations:
\begin{equation}
	\left\{\begin{alignedat}{3}
      &i\partial_t u+\Del u=vu,\\
      &\partial_t^2 v-\Del v+v=|u|^2,
      \end{alignedat}\right.
       \label{eq_KGS}
\end{equation}
in four space dimensions, and the Zakharov system \eqref{eq_Z} in two space dimensions in a similar way to \cite{MR3548257}. 

Now, the Laplacian $A=-\Del$ is understood as the self-adjoint realization in the Hilbert space $L^2(\Omega)$ with dense domain $D(A)=(H^2\cap H^1_0)(\Omega)$. We denote the resolvent set of $A$ by $\rho(A)$, and assume that $0\in\rho(A)$. This assumption implies that $A$ is a positive operator with a bounded inverse $A^{-1}:L^2(\Omega)\to D(A)$. Under this framework, we can define the fractional powers of $A$ via the spectral theorem. Moreover $D(A^{1/2})=H_0^1(\Omega)$ and $\|A^{1/2}u\|_{L^2}=\|\nabla u\|_{L^2}$. Then, \eqref{eq:(1.1)} can be rewritten as follows:
\begin{equation}
	\left\{\begin{alignedat}{3}
      &i\partial_t u=Au+vu,\\
      &\partial_t v=-A^{1/2} w,\\
      &\partial_t w=A^{1/2} v+|u|^2,\\
      &(u(0),v(0),w(0))=(u_0,v_0,w_0).
       \end{alignedat}\right.
       \label{eq_1_3}
\end{equation}
The purpose of this paper is to construct a unique global strong solution to \eqref{eq_1_3}.
The system \eqref{eq_1_3} has the following conserved quantities:
\begin{align*}
	M(u)=\|u\|_{L^2}^2,\quad E(\vec{u})=\|\nabla u\|_{L^2}^2+\frac{1}{2}(\|A^{1/4}v\|_{L^2}^2+\|A^{1/4}w\|_{L^2}^2)+(v,|u|^2)_{L^2},
\end{align*}
where $\vec{u}=(u,v,w)$, and
\begin{align*}
	(u,v)_{L^2}=\mathrm{Re}\int_{\Omega}u(x)\overline{v(x)}dx.
\end{align*}
We define
\begin{align}
X_e=D(A^{1/2})\times D(A^{1/4})\times D(A^{1/4}),\quad X=D(A)\times D(A^{3/4})\times D(A^{3/4}).
\end{align}

We now state the main result of this paper.
\begin{theorem}\label{thm_1_1}
Let $\Omega$ be a domain in $\R^3$, and assume $0\in\rho(A)$ for the operator $A=-\Del$ with $D(A)=(H^2\cap H^1_0)(\Omega)$. Then, there exists a constant $C_0>0$ such that for any $\vec{u}_0=(u_0,v_0,w_0)\in X$ satisfying $\|u_0\|_{L^2}<C_0$, there exists a unique global solution $\vec{u}$ to \eqref{eq_1_3} such that
\begin{align}
\vec{u}\in C&(\mathbb{R}; X)
\cap C^1(\mathbb{R}; Y),
\end{align}
where $Y= L^2(\Omega)\times D(A^{1/4})\times D(A^{1/4})$. Moreover, the following properties hold.
\begin{itemize}
\item $\vec{u}$ depends continuously on $\vec{u}_0$ in the following sense: if $\vec{u}_0^n \to \vec{u}_0$ in $X$ and $\vec{u}_n=(u_n,v_n,w_n)$ is the corresponding solution to \eqref{eq_1_3} with $\vec{u}_n(0)=\vec{u}_0^n$, then $\vec{u}_n \to \vec{u}$ in  $C([-T,T]; H^s(\Omega)\times H^{\sigma}(\Omega)\times H^{\sigma}(\Omega))$
for any $T>0$ and $(s,\sigma)\in [0,2)\times [0,3/2)$.
\item $M(u(t))=M(u_0),\ E(\vec{u}(t))=E(\vec{u}_0)$ for all $t\in\mathbb{R}$.
\item There exists a constant $C>0$, which depends on $\|\vec{u}_0\|_X$, such that
\begin{equation}
\|\vec{u}(t)\|_{X}\le C\exp(C|t|)\quad \text{for all}\ t\in\R.
\label{est_1}
\end{equation}
\end{itemize}
\end{theorem}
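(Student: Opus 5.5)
The plan follows the Hayashi--Ozawa / Ozawa--Tomioka direct approach: construct solutions of regularized problems, prove uniform higher-order energy bounds, and show the approximations form a Cauchy sequence in a weaker norm. Completeness then gives the solution, and interpolation upgrades the convergence.

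\emph{Step 1 (energy-level a priori bound).} We first obtain a uniform $X_e$ bound from the conserved quantities. In $\R^3$ with Dirichlet data, Gagliardo--Nirenberg gives
\[
|(v,|u|^2)_{L^2}|\le \|v\|_{L^3}\|u\|_{L^3}^2\le C\|A^{1/4}v\|_{L^2}\|u\|_{L^2}\|\nabla u\|_{L^2},
\]
using $D(A^{1/4})\hookrightarrow L^3$ and $\|u\|_{L^3}^2\le C\|u\|_{L^2}\|\nabla u\|_{L^2}$. This is exactly where the $L^2$-critical structure enters. If $\|u_0\|_{L^2}<C_0$ with $C_0$ small, Young's inequality absorbs this term into $E$. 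Conservation of $M$ and $E$ then gives $\|\nabla u\|_{L^2}^2+\|A^{1/4}v\|^2+\|A^{1/4}w\|^2\le C(E(\vec u_0)+\ldots)$ uniformly in $t$. Note that the smallness is needed only on $\|u_0\|_{L^2}$.

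\emph{Step 2 (higher-order estimate, the main obstacle).} We next bound $\|\vec u(t)\|_X$. Instead of $\|Au\|$ directly, we control $\|\partial_t u\|_{L^2}$ (equivalent to $\|Au\|$ modulo $\|vu\|$) together with $\|A^{3/4}v\|,\|A^{3/4}w\|$. Differentiating the Schrödinger equation in $t$ gives
\[
\frac{d}{dt}\|\partial_tu\|^2=2(-i(\partial_tv)u,\partial_tu)=2(iA^{1/2}w\,u,\partial_t u),
\]
which is bounded by $\|A^{1/2}w\|_{L^3}\|u\|_{L^6}\|\partial_tu\|_{L^2}$. For the wave part,
\[
\frac{d}{dt}\tfrac12\bigl(\|A^{3/4}v\|^2+\|A^{3/4}w\|^2\bigr)=(A^{3/4}|u|^2,A^{3/4}w).
\]
Handling this requires $\|A^{3/4}(|u|^2)\|\lesssim\|u\|_{H^2}\|u\|_{H^1}$; we would establish it via $D(A^{3/4})$ characterized by $H^{3/2}$ with zero trace, since $|u|^2$ vanishes on the boundary. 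The key point is that each right-hand side is linear in the top-order quantity $N(t)=\|\partial_tu\|^2+\|A^{3/4}v\|^2+\|A^{3/4}w\|^2$, with coefficients controlled by the Step 1 bound. For $\|vu\|_{L^2}\le\|v\|_{L^3}\|u\|_{L^6}$ this holds directly. Terms that are superlinear in high norms are to be interpolated against $X_e$; for example $\|A^{1/2}w\|_{L^3}\lesssim\|A^{3/4}w\|$. This yields $N'\le C N$, and Gronwall gives \eqref{est_1}. The delicate parts are the boundary compatibility for $A^{3/4}(|u|^2)$ and equivalence of $N$ with $\|\vec u\|_X^2$ without losing linearity. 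These are where I expect the most work.

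\emph{Step 3 (construction, uniqueness, dependence, conservation).} We regularize using Yosida approximations $J_\varepsilon=(1+\varepsilon A)^{-1}$, applied to the nonlinearities. For fixed $\varepsilon$ the system is an ODE in $X$ solvable globally by the Lipschitz property plus the bounds above, which are uniform in $\varepsilon$ since the regularized system still conserves modified $M,E$. The difference of two approximants is estimated in $Y_0=L^2\times D(A^{-1/4})\times D(A^{-1/4})$ or in $X_e$ using the uniform $X$ bounds, giving $\sup_{[-T,T]}\|\vec u_\varepsilon-\vec u_{\varepsilon'}\|\to0$. The limit lies in $L^\infty X$ by weak-star lower semicontinuity, and in $C(H^s\times H^\sigma)$ for $s<2,\sigma<3/2$ by interpolation. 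Strong continuity in $X$ and the $C^1(Y)$ regularity follow from the standard argument: weak continuity plus continuity of the $X$-energy identity, and then the equation. The same difference estimate gives uniqueness and the stated continuous dependence. Conservation of $M,E$ passes to the limit.
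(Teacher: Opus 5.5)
\textbf{Gap: the difference estimate in Step 3 is placed in norms that do not close.} This one estimate carries existence, uniqueness and continuous dependence, so the gap matters. Write $\delta u=u_\varepsilon-u_{\varepsilon'}$ and $\delta v=v_\varepsilon-v_{\varepsilon'}$. Up to regularization errors, $\tfrac12\tfrac{d}{dt}\|\delta u\|_{L^2}^2$ contains the term $\int_\Omega\delta v\,\mathrm{Im}(u_\varepsilon\overline{\delta u})\,dx$.

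\emph{In $Y_0=L^2\times D(A^{-1/4})\times D(A^{-1/4})$.} This term pairs $\delta v\in D(A^{-1/4})$ with $u_\varepsilon\overline{\delta u}$. That product would have to lie in $D(A^{1/4})$, which needs half a derivative on $\delta u$ that the $L^2$ norm does not control. Interpolating against the uniform $X$ bound only gives $y'\lesssim y^{7/8}+o(1)$ for $y(t)=\|\vec u_\varepsilon(t)-\vec u_{\varepsilon'}(t)\|_{Y_0}^2$. This is a non-Osgood inequality, so it yields neither the Cauchy property nor uniqueness.

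\emph{In $X_e$.} The corresponding term $(-i\,\delta v\,u_\varepsilon,A\delta u)_{L^2}$ contains $\nabla\delta v$ paired with $u_\varepsilon\nabla\overline{\delta u}$, again half a derivative short. Using the uniform $X$ bound gives only a sublinear inequality such as $y'\lesssim\sqrt{y}$. Closing at this level needs a structural correction to the difference energy. For instance, you could add $(|u_\varepsilon|^2-|u_{\varepsilon'}|^2,\delta v)_{L^2}$ and exploit $\partial_t|u|^2=-2\nabla\cdot\mathrm{Im}(\bar u\nabla u)$. You do not supply any such correction.

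\textbf{The fix is the paper's choice, $L^2\times L^2\times L^2$.} There the wave group is unitary. The uniform $X$ bound gives $\|u_\varepsilon\|_{L^\infty}\lesssim\|u_\varepsilon\|_{H^2}\le M_2(T)$, so:
\begin{itemize}
\item the term above is $\lesssim M_2\|\delta v\|_{L^2}\|\delta u\|_{L^2}$;
\item the wave forcing satisfies $\||u_\varepsilon|^2-|u_{\varepsilon'}|^2\|_{L^2}\lesssim M_2\|\delta u\|_{L^2}$;
\item the regularization errors are $O(\sqrt\varepsilon+\sqrt{\varepsilon'})$, by $\|(J_\varepsilon-J_{\varepsilon'})f\|_{L^2}\lesssim(\sqrt\varepsilon+\sqrt{\varepsilon'})\|\nabla f\|_{L^2}$.
\end{itemize}
Gronwall then gives convergence in $C([-T,T];(L^2)^3)$. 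The same estimate for two solutions in the class of the theorem gives uniqueness. Interpolation with the uniform $X$ bound gives the stated continuous dependence.

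\textbf{With that repair, the rest of your plan is essentially the paper's argument.} Three smaller points:
\begin{itemize}
\item Your Step 2 estimates the Schr\"odinger and wave contributions separately, instead of using the paper's cancellation of the $\mathrm{Re}(\bar u\Delta u)\,\partial_t v$ terms. This is legitimate, since each contribution is $\le C(M_1)(1+\|\Delta u\|_{L^2})\|A^{3/4}w\|_{L^2}$. The boundary issue you flag disappears if you write the wave term as $-(\Delta|u|^2,A^{1/2}w)_{L^2}$ and use $L^{3/2}$--$L^3$ H\"older. Note that what you obtain is $N'\le C(1+N)$, not $N'\le CN$.
\item Your triangle-inequality comparison of $\|\partial_t u\|_{L^2}$ with $\|Au\|_{L^2}$ is simpler than the paper's expansion and is fine.
\item Conservation of the modified $M,E$ for the regularized system requires the symmetric form $-iJ_\varepsilon(J_\varepsilon v\cdot J_\varepsilon u)$ and $J_\varepsilon|J_\varepsilon u|^2$. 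Regularizing as $-iJ_\varepsilon(vu)$ already breaks mass conservation.
\end{itemize}
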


\begin{remark}
The assumption $0 \in \rho(A)$ in Theorem \ref{thm_1_1} is essential for the following reasons:

(i)\ Embedding and Poincar\'{e} Inequalities: The condition $0 \in \rho(A)$ is equivalent to the strictly positive lower bound of the spectrum, $\lambda_1 = \inf \sigma(A) > 0$. Under the homogeneous Dirichlet boundary condition, this ensures the Poincar\'{e} inequality, and more generally, the fractional Poincar\'{e} inequality $\|u\|_{L^2} \le C \|A^{s/2} u\|_{L^2}$ for $s > 0$ (Lemma \ref{lem_poin}). This allows us to control the $L^2$-norm by the energy functional, which is important for establishing global-in-time estimates.

(ii)\ Equivalence of Norms: In the study of \eqref{eq_1_3}, we frequently use the equivalence between the domain $D(A^{s/2})$ of the fractional powers of $A$ and the fractional Sobolev space $\tilde{W}^{s,2}(\Omega)$ characterized by the Gagliardo seminorm. The boundedness of $A^{-1}$ ensures that the $L^2$-norm is controlled by $\dot{H}^s$ norms, allowing these spaces to be identified.

(iii)\ Scope of Domains $\Omega$: Our result is applicable to any domain $\Omega$ where the Poincar\'{e} inequality holds. This includes not only bounded domains but also unbounded domains that are bounded in at least one direction, such as $\Omega = \mathbb{R}^2 \times (0, 1)$.
\end{remark}

\begin{remark}
The constant $C_0$ in Theorem \ref{thm_1_1} is related to the best constant for the Gagliardo--Nirenberg--Sobolev inequality (see Proposition \ref{prop_3_2}).
\end{remark}

\begin{remark}
Note that the solution $u$ to the Schr\"{o}dinger equation was shown to belong to the space of weakly continuous functions $C_w(\R;D(A))$ for both \eqref{eq_Z} and \eqref{eq_KGS} in \cite{MR4305957, MR4859330}. In Theorem \ref{thm_1_1}, we prove that the solution $u$ to our system \eqref{eq_1_3} belongs to the space of strongly continuous functions $C(\R;D(A))$, which implies that it is a strong solution. We emphasize that our argument in Proposition \ref{prop_5_5} is applicable to \eqref{eq_Z} and \eqref{eq_KGS}. Consequently, by using the same argument, we can improve the regularity of the solutions in \cite{MR4305957, MR4859330} from $C_w(\R;D(A))$ to $C(\R;D(A))$.
\end{remark}

\begin{remark}
It is worth noting the exponential bound \eqref{est_1} in Theorem \ref{thm_1_1}.

\noindent(i) The estimate \eqref{est_1} is an improvement of a typical consequence of applying the Brezis--Gallouet inequality to control $\|u(t)\|_{L^\infty}$, whereby the higher-order Sobolev norms are bounded by $C\exp(Ct^2)$. In our case, by employing a more direct energy estimate and exploiting the specific structure of \eqref{eq_1_3}, we are able to establish the global existence with the exponential bound \eqref{est_1}, even in the $L^2$-critical case.

\noindent(ii) The systems \eqref{eq_Z} and \eqref{eq_KGS} correspond to the endpoint cases of \eqref{eq:(1.1_3)}. Our result shows that the same exponential bound also holds in the intermediate case.
\end{remark}

\begin{remark}
Both \eqref{eq_Z} and \eqref{eq_KGS} require Yudovich's argument based on the critical Sobolev type inequality in the $L^2$-critical cases, $N=2$ and $N=4$, respectively, while we do not require such an argument even in the $L^2$-critical case. This is due to the fact that the loss of derivative for \eqref{eq_1_3} is smaller than that for \eqref{eq_Z}, and \eqref{eq_1_3} is of a lower dimension than \eqref{eq_KGS}.
\end{remark}

This paper is organized as follows. In Section $2$, we collect the basic estimates used throughout the paper. In Section $3$, we consider approximate problems by the resolvent of the Laplacian and construct approximate solutions in the domain of the linear operator. Next, we establish the uniform bound of the approximate solutions in the energy space $D(A^{1/2})\times D(A^{1/4})\times D(A^{1/4})$ by using the mass and energy conservation laws. In Section $4$, we obtain the uniform estimates in the higher energy space $D(A)\times D(A^{3/4})\times D(A^{3/4})$ by estimating the time derivative of the first component of approximate solutions. Moreover, we prove that the sequence of approximate solutions forms a Cauchy sequence in $L^{\infty}_{\mathrm{loc}}(\R;L^2\times L^2\times L^2)$ and construct the solution of \eqref{eq_1_3} by the completeness of a function space. Finally, we confirm that the limit of the sequence of approximate solutions satisfies the equation \eqref{eq_1_3}. We emphasize that our argument does not rely on any compactness theorem.

%Chapter 2
\section{Preliminaries}
Throughout this paper, $\nabla u$ denotes the weak gradient of $u$. Note that $D(A^{1/2})=H_0^1(\Omega)$ and $\|A^{1/2}u\|_{L^2}=\|\nabla u\|_{L^2}$.
Since the nonlinear term in \eqref{eq_1_3} is not Lipschitz continuous on bounded subsets of $X=D(A)\times D(A^{3/4})\times D(A^{3/4})$, we consider the following approximate problem:
\begin{equation}
	\left\{\begin{alignedat}{4}
      &\partial_t u=-iA u-iJ_n(J_nv\cdot J_nu),\\
      &\partial_t v=-A^{1/2} w,\\
      &\partial_t w=A^{1/2} v+J_n(|J_nu|^2)\\
      &(u(0),v(0),w(0))=(J_nu_0,J_nv_0,J_nw_0),
       \end{alignedat}\right.
       \label{eq:(1.3)}
\end{equation}
where $J_n=\left(1-\frac{1}{n}\Del\right)^{-1}$ denotes the resolvent of the Laplacian.
Note that the regularized system \eqref{eq:(1.3)} preserves the Hamiltonian structure. 

We first collect several properties of $J_n$ used in the proof of the main theorem.
\begin{lemma}
    \label{lem_2_1}
Let $1<p<\infty$ and let $X$ be any of the spaces $D(A^{1/2})$, $D(A)$ or $L^p(\Omega)$ and let $X^*$ be its dual space. Then, the following properties hold:

    \noindent (1) $\langle J_n f,g\rangle_{X,X^*}=\langle f,J_n g\rangle_{X,X^*}$ for $f\in X$, $g\in X^*$.
    
    \noindent (2) $J_n$ is a bounded self-adjoint operator in $L^2(\Omega)$ with $J_n(L^2(\Omega))=D(A)$.

    \noindent (3) $\|J_n\|_{\mathcal{L}(X)}\le 1$.

    \noindent (4) 
        $J_n u\to u$ in $X$ as $n\to\infty$ for any $u\in X$.

    \noindent (5) The following inequalities hold:
    \begin{equation}
        \|\nabla J_n u\|_{L^2}\le \sqrt{n}\|u\|_{L^2},\quad \|\Del J_n u\|_{L^2}\le n\|u\|_{L^2}\quad \text{for any}\ u\in L^2(\Omega).
    \end{equation}
\end{lemma}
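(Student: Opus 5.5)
The plan is to read everything off the spectral calculus of $A$. Here $J_n=(1+\frac1n A)^{-1}=\varphi_n(A)$ with $\varphi_n(\lambda)=n/(n+\lambda)$, and $0<\varphi_n\le 1$ on $\sigma(A)\subset[\lambda_1,\infty)$. The $L^p$ statements are the exception: they do not follow from this and need a separate maximum-principle/duality argument.

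\textbf{Items (2), (5), and (3)–(4) on $D(A^{1/2})$, $D(A)$.} Since $\varphi_n$ is real and bounded by $1$, $J_n$ is bounded and self-adjoint on $L^2$ with $\|J_n\|\le1$. Its range is $\operatorname{Ran}(1+\frac1nA)^{-1}=D(A)$, which gives (2).

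For (5), the estimates $\sup_\lambda \lambda^{1/2}\varphi_n(\lambda)\le \sqrt n/2\le\sqrt n$ and $\sup_\lambda\lambda\varphi_n(\lambda)\le n$ give the bounds. Here one uses $\|\nabla J_nu\|=\|A^{1/2}J_nu\|$ and $\Delta J_n u=-AJ_nu$, which is valid because $J_nu\in D(A)$.

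On $X=D(A^{k})$ with the graph norm $\|A^ku\|_{L^2}$ (equivalent, since $0\in\rho(A)$), $J_n$ commutes with $A^k$. This gives (3). For (4), write $\|A^k(J_nu-u)\|^2=\int|\varphi_n(\lambda)-1|^2\,d\|E_\lambda A^ku\|^2$. The integrand is bounded by $1$ and tends to $0$ pointwise, so dominated convergence gives the limit.

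\textbf{Item (1).} For $X=D(A^{1/2})$ or $D(A)$, identify $X^*$ with $D(A^{-1/2})$ or $D(A^{-1})$ through the $L^2$ pairing. Extend $J_n$ to $X^*$ by the same functional calculus, or equivalently by duality. Then (1) holds for $f,g\in D(A)$ by $L^2$ self-adjointness, and extends by density and continuity. For $L^p$, with $X^*=L^{p'}$, the identity holds for $f,g\in L^2\cap L^p$ (resp. $L^{p'}$) by self-adjointness. It then extends by density once $J_n$ is known to be bounded on $L^p$ and on $L^{p'}$.

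\textbf{Main obstacle: (3)–(4) for $L^p$.} I would first show that $J_n$ is positivity preserving and $L^\infty$-contractive. For $f\in L^2\cap L^\infty$ real, set $u=J_nf\in H^1_0$, which solves $u-\frac1n\Delta u=f$ weakly. Test with $(u-M)_+\in H^1_0$, where $M=\|f\|_\infty$; this is valid since $M\ge0$. It gives $\|(u-M)_+\|^2+\frac1n\|\nabla(u-M)_+\|^2\le0$, hence $u\le M$, and symmetrically $u\ge-M$. By self-adjointness, duality then gives $L^1$-contractivity, and Riesz–Thorin gives $\|J_n\|_{L^p\to L^p}\le1$ on $L^2\cap L^p$. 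The extension to $L^p$ follows by density.

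For (4) in $L^p$, the convergence holds on the dense set $L^2\cap L^p$ intersected with a nicer class. Using interpolation, $\|J_nu-u\|_p\le\|J_nu-u\|_2^{\theta}\|J_nu-u\|_q^{1-\theta}$ with $q$ on the other side of $2$ and $u\in L^2\cap L^q$, the $L^2$ convergence (already shown) combined with the uniform $L^q$ bound gives convergence. The uniform bound $\|J_n\|\le1$ then extends convergence to all of $L^p$ by a $3\varepsilon$ argument.
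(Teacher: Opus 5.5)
Your proof is correct. The paper gives no argument of its own for this lemma; it only points to \cite{MR2002047, MR3802567}. So your proposal is a self-contained proof of what the paper leaves to the literature, not a variant of an in-paper proof.

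The Hilbert-scale part is the standard argument: the functional calculus of $\varphi_n(\lambda)=n/(n+\lambda)$ handles items (2) and (5), and items (1), (3), (4) for $D(A^{1/2})$ and $D(A)$. For the $L^p$ part there are two common alternatives:
\begin{itemize}
\item test the resolvent equation with $|u|^{p-2}u$, which proves accretivity of the Dirichlet Laplacian in $L^p$ directly;
\item write $J_n=n\int_0^\infty e^{-nt}e^{t\Delta}\,dt$ and use the $L^p$-contractivity of the heat semigroup.
\end{itemize}
Your route is equally valid: Stampacchia truncation for the $L^\infty$ bound, duality for $L^1$, Riesz--Thorin, and then $L^2$-convergence plus interpolation and a $3\varepsilon$ argument for (4). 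Its advantage is that the only test functions are truncations such as $(u-M)_+$. Their membership in $H^1_0(\Omega)$ is elementary even for unbounded $\Omega$, whereas $|u|^{p-2}u\in H^1_0(\Omega)$ needs an extra approximation argument.

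Two points should be made explicit.
\begin{itemize}
\item \textbf{Complex-valued data.} You need the constant $1$ for complex functions twice: in the $L^\infty$ bound used in the duality step, and in Riesz--Thorin. It comes from positivity preservation via $|J_n f|\le J_n|f|$. Splitting into real and imaginary parts would only give a constant larger than $1$.
\item \textbf{Choice of norm.} The bound $\|J_n\|_{\mathcal{L}(X)}\le 1$ depends on the norm. It holds for $\|A^{k}\cdot\|_{L^2}$ or the graph norm, which is what should be meant on $D(A)$. With the full $H^2$ norm you only get a uniform bound through norm equivalence; that uniform bound is all the paper actually uses.
\end{itemize}
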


For the proof, see, e.g.,  \cite{MR2002047, MR3802567}.

\begin{lemma} [\cite{MR2002047, MR3802567}] 
\label{lem_2_2}
    There exists a constant $C>0$ such that
    \begin{align}
        \|(J_m-J_n)u\|_{L^2}\le C\left(\frac{1}{\sqrt{m}}+\frac{1}{\sqrt{n}}\right)\|\nabla u\|_{L^2}
    \end{align}
    for any $m,n\in\mathbb{N}, u\in D(A^{1/2})$.
\end{lemma}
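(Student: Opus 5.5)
The plan is to reduce the estimate to the one-parameter case by the triangle inequality, inserting the identity:
\begin{align}
\|(J_m-J_n)u\|_{L^2}\le \|(J_m-1)u\|_{L^2}+\|(1-J_n)u\|_{L^2}.
\end{align}
It therefore suffices to prove $\|(1-J_n)u\|_{L^2}\le C n^{-1/2}\|\nabla u\|_{L^2}$ for every $u\in D(A^{1/2})=H_0^1(\Omega)$, with $C$ independent of $n$. Since $J_n=(1+\frac1n A)^{-1}$ is a function of the self-adjoint operator $A$, we have
\begin{align}
1-J_n=\tfrac1n A(1+\tfrac1n A)^{-1}=\tfrac1n A J_n .
\end{align}

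For the one-parameter estimate I would use the spectral theorem for $A$. For $u\in D(A^{1/2})$ we can write
\begin{align}
(1-J_n)u=\varphi_n(A)A^{1/2}u,\qquad \varphi_n(\lambda)=\frac{\lambda^{1/2}/n}{1+\lambda/n},\quad \lambda\ge0 .
\end{align}
Put $s=\lambda/n$. Then $\varphi_n(\lambda)=n^{-1/2}\,\frac{s^{1/2}}{1+s}$. Since $s^{1/2}\le\frac12(1+s)$ by AM--GM, we get $\sup_{\lambda\ge0}|\varphi_n(\lambda)|\le \frac{1}{2\sqrt n}$. The functional calculus then gives
\begin{align}
\|(1-J_n)u\|_{L^2}\le \tfrac{1}{2\sqrt n}\|A^{1/2}u\|_{L^2}=\tfrac{1}{2\sqrt n}\|\nabla u\|_{L^2}.
\end{align}
The last equality is the identity $\|A^{1/2}u\|_{L^2}=\|\nabla u\|_{L^2}$ recalled above. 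Combining this with the triangle inequality yields the lemma with $C=1/2$. This constant is independent of $\Omega$, and the argument does not even use $0\in\rho(A)$.

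If one prefers to avoid the spectral theorem, the same bound follows from an energy identity. Let $f=(1-J_n)u$, so that $f-\frac1n\Delta J_nu=0$, i.e.\ $f=\frac1n A J_n u$. Pair with $f$ and use self-adjointness of $J_n$ together with the commutation of $A^{1/2}$ and $J_n$ (Lemma \ref{lem_2_1}):
\begin{align}
\|f\|_{L^2}^2=\tfrac1n\,(A^{1/2}J_nu,\,A^{1/2}f)_{L^2}\le \tfrac1n\|\nabla J_n u\|_{L^2}\,\|\nabla f\|_{L^2}\le \tfrac1n\|\nabla u\|_{L^2}\,\|\nabla f\|_{L^2},
\end{align}
where the last step uses (3) of Lemma \ref{lem_2_1} with $X=D(A^{1/2})$. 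Next, $\nabla f=\frac1n\nabla A J_nu$, and (5) applied at the level of $A^{1/2}u$ gives $\|\nabla f\|_{L^2}\le \|\nabla u\|_{L^2}$. Hence $\|f\|_{L^2}^2\le \frac1n\|\nabla u\|_{L^2}^2$.

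The only delicate point is justifying this commutation and the domain bookkeeping, namely that $A^{1/2}J_n=J_nA^{1/2}$ on $D(A^{1/2})$. This is immediate from the functional calculus, which is why I would present the spectral argument as the main proof.
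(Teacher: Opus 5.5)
Your proof is correct. The paper gives no proof of this lemma and only cites the references in the statement, so your spectral computation is a complete argument. It yields the explicit constant $C=1/2$ and, as you note, does not use $0\in\rho(A)$.

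A route closer to the ingredients the paper collects in Lemma~\ref{lem_2_1} is duality. For $u\in D(A^{1/2})$ and $g\in L^2(\Omega)$,
$((1-J_n)u,g)_{L^2}=\frac1n(u,AJ_ng)_{L^2}=\frac1n(\nabla u,\nabla J_ng)_{L^2}\le n^{-1/2}\|\nabla u\|_{L^2}\|g\|_{L^2}$
by part (5). This needs only self-adjointness and the form identity, not the functional calculus, but gives the constant $1$ instead of $1/2$.

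Your energy-identity variant is also valid, with two small remarks.

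\textbf{Part (3).} The bound $\|\nabla J_nu\|_{L^2}\le\|\nabla u\|_{L^2}$ follows from part (3) only if $D(A^{1/2})$ is normed by $\|\nabla\cdot\|_{L^2}$. It holds regardless, e.g.\ by testing $J_nu+\frac1nAJ_nu=u$ against $AJ_nu$.

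\textbf{Commutation.} The commutation $A^{1/2}J_n=J_nA^{1/2}$ that you flag as delicate can be bypassed. Writing $A^{1/2}f=A^{1/2}u-A^{1/2}J_nu$ gives
$\|f\|_{L^2}^2=\frac1n\bigl[(\nabla J_nu,\nabla u)_{L^2}-\|\nabla J_nu\|_{L^2}^2\bigr]\le\frac{1}{4n}\|\nabla u\|_{L^2}^2$
by $ab-b^2\le a^2/4$. This recovers your constant $\frac{1}{2\sqrt n}$ with a purely variational argument.
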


Next, we recall several characterizations of fractional Sobolev spaces. Let $\tilde{W}^{s,2}(\Omega)$ be the fractional Sobolev space defined by the Gagliardo seminorm:
\begin{align}
&\tilde{W}^{s,2}(\Omega)=\left\{v\in L^2(\Omega):[v]_{s,2}<\infty\right\},\\
&[v]_{s,2}=\left(\int_{\Omega}\int_{\Omega}\frac{|v(x)-v(y)|^2}{|x-y|^{3+2s}}dxdy\right)^{1/2}.
\end{align}
It is well known that
\[
\tilde{W}^{s,2}_0(\Omega)=(L^2(\Omega),H_0^1(\Omega))_{s,2}\quad \text{for}\ 0<s<1.
\]

On the other hand, since $A$ is a positive self-adjoint operator and $D(A^{1/2})=H_0^1(\Omega)$, Theorem $4.36$ in \cite{MR3753604} yields
\[
D(A^{s/2})=(L^2(\Omega),H_0^1(\Omega))_{s,2}=[L^2(\Omega),H_0^1(\Omega)]_s\quad \text{for}\ 0<s<1.
\]

Consequently,
\[
D(A^{s/2})=\tilde{W}^{s,2}_0(\Omega).
\]

The assumption $0\in\rho(A)$ implies the following Poincar\'{e}-type inequality:

\begin{lemma}\label{lem_poin}
There exists a positive constant $C$ such that
\begin{align}
\|f\|_{L^2}\le C\|A^{s/2}f\|_{L^2}\quad \text{for all}\ f\in D(A^{s/2}).
\end{align}
\end{lemma}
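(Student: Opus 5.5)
The plan is to use the spectral theorem for the positive self-adjoint operator $A$. The assumption $0\in\rho(A)$ means that $\lambda_1:=\inf\sigma(A)>0$, because $\sigma(A)\subset[0,\infty)$ is closed and does not contain $0$. Let $E(\cdot)$ be the spectral measure of $A$. For $f\in D(A^{s/2})$ with $s>0$ we can then write
\begin{align}
\|A^{s/2}f\|_{L^2}^2=\int_{[\lambda_1,\infty)}\lambda^{s}\,d\|E(\lambda)f\|_{L^2}^2 .
\end{align}
On the support of the measure we have $\lambda\ge\lambda_1$, so $\lambda^s\ge\lambda_1^s$. This gives
\begin{align}
\|A^{s/2}f\|_{L^2}^2\ge \lambda_1^{s}\int_{[\lambda_1,\infty)}d\|E(\lambda)f\|_{L^2}^2=\lambda_1^{s}\|f\|_{L^2}^2 ,
\end{align}
which is the claim with $C=\lambda_1^{-s/2}$. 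Here $C$ depends on $s$, which is implicit in the statement.

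An equivalent route avoids the spectral measure. Since $A^{-1}$ is bounded with $\|A^{-1}\|=\lambda_1^{-1}$, the functional calculus shows that $A^{-s/2}=(A^{-1})^{s/2}$ is bounded with norm $\lambda_1^{-s/2}$. One then writes $f=A^{-s/2}A^{s/2}f$ and estimates directly.

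The only point needing care is the identification $\inf\sigma(A)>0$ from $0\in\rho(A)$, which rests on $A\ge 0$. This holds because $(Au,u)=\|\nabla u\|_{L^2}^2\ge0$, so the spectrum lies in $[0,\infty)$. Since $\rho(A)$ is open, some neighbourhood of $0$ lies in $\rho(A)$, and therefore $\lambda_1>0$. I do not expect any step to be a real obstacle; the lemma is a direct consequence of the functional calculus.
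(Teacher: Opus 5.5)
Your proof is correct and is essentially the paper's argument: you use $0\in\rho(A)$ together with $A\ge 0$ to get $\lambda_1=\inf\sigma(A)>0$, then bound $\int_{[\lambda_1,\infty)}\lambda^{s}\,d\|E(\lambda)f\|_{L^2}^2$ from below by $\lambda_1^{s}\|f\|_{L^2}^2$, giving $C=\lambda_1^{-s/2}$. Your added justification that $\lambda_1>0$ and the alternative route via the bounded operator $A^{-s/2}$ are valid but not needed.
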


\begin{proof}
Since $A$ is a positive self-adjoint operator in $L^2(\Omega)$ and $0 \in \rho(A)$ by assumption, the spectrum $\sigma(A)$ is strictly bounded away from zero. That is, $\lambda_1 \coloneqq \inf \sigma(A) > 0$. 

By the spectral theorem, for any $s > 0$ and $f \in D(A^{s/2})$,
\begin{align}
\|A^{s/2}f\|_{L^2}^2 = \int_{\lambda_1}^{\infty} \lambda^s d\|E_{\lambda}f\|_{L^2}^2 \ge \lambda_1^s \int_{\lambda_1}^{\infty} d\|E_{\lambda}f\|_{L^2}^2 = \lambda_1^s\|f\|_{L^2}^2.
\end{align}
The desired inequality follows with $C = \lambda_1^{-s/2}$.
\end{proof}

We shall also use the following interpolation inequality.
\begin{lemma}[\cite{MR2944369}]\label{lem_2_4}
Let $s\in(0,1)$. Then, there exists a constant $C>0$ such that
\begin{align}
\|f\|_{L^q}\le C\|A^{s/2}f\|_{L^2}^{\theta}\|f\|_{L^2}^{1-\theta}
\end{align}
for all $q\in[2,2_s^*]$ and $f\in D(A^{s/2})$, where $2^*_s=\displaystyle\frac{6}{3-2s}$ and $\theta=\frac{3}{s}\left(\frac{1}{2}-\frac{1}{q}\right)$.
\end{lemma}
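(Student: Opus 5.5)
The strategy is to reduce to the endpoint case $q=2^*_s$ by Hölder's inequality, and to prove the endpoint case by extending $f$ by zero to all of $\R^3$, interpolating the extension operator, and applying the Sobolev embedding on $\R^3$.

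\emph{Reduction to the endpoint.} For $q\in[2,2^*_s]$ write $\frac1q=\frac{1-\theta}{2}+\frac{\theta}{2^*_s}$. Since $\frac12-\frac1{2^*_s}=\frac{s}{3}$, this gives exactly $\theta=\frac3s\left(\frac12-\frac1q\right)\in[0,1]$. Hölder's inequality then yields
\[
\|f\|_{L^q}\le \|f\|_{L^2}^{1-\theta}\|f\|_{L^{2^*_s}}^{\theta}.
\]
It therefore suffices to prove
\[
\|f\|_{L^{2^*_s}}\le C\|A^{s/2}f\|_{L^2}\quad\text{for } f\in D(A^{s/2}).
\]

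\emph{Zero extension.} Let $E$ denote extension by zero from $\Omega$ to $\R^3$. Then:
\begin{itemize}
\item $E$ is an isometry $L^2(\Omega)\to L^2(\R^3)$;
\item $E$ maps $H_0^1(\Omega)$ into $H^1(\R^3)$ with $\nabla(Ef)=E(\nabla f)$. This holds first for $C_c^\infty(\Omega)$ and then by density.
\end{itemize}
Hence $\|Ef\|_{H^1(\R^3)}\le C(\|f\|_{L^2}+\|A^{1/2}f\|_{L^2})$, using $\|A^{1/2}f\|_{L^2}=\|\nabla f\|_{L^2}$.

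\emph{Interpolation.} By complex interpolation, $E$ is bounded from $[L^2(\Omega),H_0^1(\Omega)]_s$ into $[L^2(\R^3),H^1(\R^3)]_s=H^s(\R^3)$. By the identification recalled above (Theorem 4.36 of \cite{MR3753604}), the first space is $D(A^{s/2})$ with the graph norm. Lemma \ref{lem_poin} shows that the graph norm is equivalent to $\|A^{s/2}f\|_{L^2}$. Therefore
\[
\|Ef\|_{H^s(\R^3)}\le C\|A^{s/2}f\|_{L^2}.
\]

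\emph{Conclusion.} The fractional Sobolev embedding $H^s(\R^3)\hookrightarrow L^{6/(3-2s)}(\R^3)$ for $0<s<1$ gives
\[
\|f\|_{L^{2^*_s}(\Omega)}=\|Ef\|_{L^{2^*_s}(\R^3)}\le C\|A^{s/2}f\|_{L^2}.
\]
Combining this with the Hölder step completes the proof.

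The only delicate point is the interpolation step. One must check that the interpolation space generated by the spectral domains coincides, with equivalent norms, with the space on which the interpolated extension operator acts. Equivalently, $E$ must be bounded on the endpoint pair with $D(A^{1/2})$ normed via $\|A^{1/2}\cdot\|_{L^2}$, which is exactly what the $H_0^1$ zero-extension property provides. If one prefers to avoid complex interpolation, an alternative is a direct Gagliardo-seminorm estimate for $Ef$, using $D(A^{s/2})=\tilde W^{s,2}_0(\Omega)$. This route requires a Hardy-type bound on $\int_\Omega|f|^2\operatorname{dist}(x,\partial\Omega)^{-2s}\,dx$, so the interpolation route is the one I would follow.
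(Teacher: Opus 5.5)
Your proof is correct, but it takes a different route from the one the paper relies on. The paper gives no argument of its own. It cites the fractional Sobolev inequality of \cite{MR2944369}, which is stated for the Gagliardo seminorm, and transfers it to $\|A^{s/2}f\|_{L^2}$ through the identification $D(A^{s/2})=\tilde W^{s,2}_0(\Omega)$ set up in Section 2; the H\"older step from $2^*_s$ down to $q\in[2,2^*_s]$ is left implicit.

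You never use the Gagliardo seminorm. You interpolate the zero-extension operator using only $[L^2(\Omega),H^1_0(\Omega)]_s=D(A^{s/2})$ and $[L^2(\R^3),H^1(\R^3)]_s=H^s(\R^3)$, and finish with the Bessel-potential embedding $H^s(\R^3)\hookrightarrow L^{2^*_s}(\R^3)$. What this buys is robustness:
\begin{itemize}
\item It works for an arbitrary open set. By contrast, the domain version of the embedding in \cite{MR2944369} assumes an extension domain, and the $\R^3$ version applied after zero extension needs a Hardy-type bound, as you note.
\item It treats all $s\in(0,1)$ uniformly. This matters at $s=1/2$, the exponent actually used in Proposition~\ref{prop_3_2} and in Section 4. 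There $[L^2(\Omega),H^1_0(\Omega)]_{1/2}$ is the Lions--Magenes space $H^{1/2}_{00}(\Omega)$. On bounded smooth domains this is strictly smaller than the closure of $C_c^\infty(\Omega)$ in the Gagliardo norm, which is $H^{1/2}_0(\Omega)=H^{1/2}(\Omega)$. So the identification the paper invokes is not literally valid at that exponent, whereas your proof never needs it.
\end{itemize}
In exchange, the Gagliardo route, where it applies, gives an explicit seminorm description of $D(A^{s/2})$, which your argument does not provide.
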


%Chapter 3
\section{Uniform bound in the energy class $X_e$}
Throughout this section, we assume $\vec{u}_0=(u_0,v_0,w_0)\in X$. We set
\begin{equation}
L=
\begin{pmatrix*}[c]
    -iA &0 &0 \\
    0 &0 &-A^{1/2} \\
    0 &A^{1/2} &0
\end{pmatrix*},
\quad
    \mathcal{N}_n(\vec{u})=
    \begin{pmatrix*}[c]
        -iJ_n(J_n v\cdot J_n u) \\
        0 \\
        J_n(|J_n u|^2)
    \end{pmatrix*},
\end{equation}
so that \eqref{eq:(1.3)} becomes
\begin{align*}
\partial_t \vec{u}(t)=L\vec{u}(t)+\mathcal{N}_n(\vec{u}(t)).
\end{align*}

\begin{lemma}\label{lem_3_1}
The operators $\pm L$ with domains $D(\pm L)=D(A^2)\times D(A^{5/4})\times D(A^{5/4})$ are m-dissipative in $X$.
\end{lemma}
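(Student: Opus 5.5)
The plan is to verify m-dissipativity directly in the Hilbert space $X=D(A)\times D(A^{3/4})\times D(A^{3/4})$, equipped with the inner product
\[
(\vec u,\vec{\tilde u})_X=(Au,A\tilde u)_{L^2}+(A^{3/4}v,A^{3/4}\tilde v)_{L^2}+(A^{3/4}w,A^{3/4}\tilde w)_{L^2}.
\]
By Lemma \ref{lem_poin} this is equivalent to the natural graph norm on $X$. The argument has two parts: first show that $\pm L$ is dissipative (in fact skew-symmetric) on $D(L)=D(A^2)\times D(A^{5/4})\times D(A^{5/4})$, and then show the range condition $R(I\mp L)=X$. Since the statement is symmetric in the sign, it suffices to treat $L$; the same computation with all signs reversed handles $-L$.

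\textbf{Dissipativity.} For $\vec u\in D(L)$ we compute $(L\vec u,\vec u)_X$ term by term.
\begin{itemize}
\item The Schr\"odinger part gives $\mathrm{Re}(-iA^2u,A u)$ taken in the $D(A)$ inner product. Writing it as $\mathrm{Re}\,(-i A^{3/2}u,A^{3/2}u)_{L^2}$, this vanishes because the real part of $-i\|A^{3/2}u\|^2$ is zero. This step uses $u\in D(A^2)\subset D(A^{3/2})$.
\item The wave part gives
\[
-(A^{3/4}A^{1/2}w,A^{3/4}v)+(A^{3/4}A^{1/2}v,A^{3/4}w)=-(A^{5/4}w,A^{3/4}v)+(A^{5/4}v,A^{3/4}w).
\]
Moving $A^{1/4}$ across (permissible since $v,w\in D(A^{5/4})$), both terms equal $\pm(A w,A v)$, so they cancel.
\end{itemize}
Hence $(L\vec u,\vec u)_X=0$, and the same holds for $-L$.

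\textbf{Range condition.} Given $\vec f=(f,g,h)\in X$, we must solve $\vec u-L\vec u=\vec f$ with $\vec u\in D(L)$.
\begin{itemize}
\item The first component reads $(1+iA)u=f$. We set $u=(1+iA)^{-1}f$, which is defined by the spectral theorem because $1+i\lambda\neq0$. Since $f\in D(A)$, we get $u\in D(A^2)$, as $A(1+iA)^{-1}$ is bounded.
\item The second and third components form the system $v+A^{1/2}w=g$, $w-A^{1/2}v=h$. Eliminating gives $(1+A)v=g-A^{1/2}h$ and $(1+A)w=h+A^{1/2}g$.
\end{itemize}
Thus $v=(1+A)^{-1}(g-A^{1/2}h)$. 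Here $g\in D(A^{3/4})$ and $A^{1/2}h\in D(A^{1/4})$, so $v\in D(A^{5/4})$, and likewise for $w$. Substituting back confirms that these $v,w$ satisfy both original equations.

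The only point requiring care is the regularity bookkeeping in the range step: the term $A^{1/2}h$ loses half a derivative, and $(1+A)^{-1}$ gains exactly one, which lands $v,w$ precisely in $D(A^{5/4})$. The functional calculus handles this once all operators are viewed as functions of $A$. Combining skew-symmetry with surjectivity of $I\mp L$ then yields that $\pm L$ are m-dissipative in $X$.
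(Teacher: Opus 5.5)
Your proposal is correct and is the standard argument (skew-symmetry of $\pm L$ in the $X$ inner product plus solvability of $(I\mp L)\vec u=\vec f$ via the functional calculus of $A$) that the paper has in mind when it calls the proof ``standard'' and cites Cazenave--Haraux without giving details. Your regularity bookkeeping ($u=(1+iA)^{-1}f\in D(A^2)$, $v,w=(1+A)^{-1}(\cdots)\in D(A^{5/4})$) and your back-substitution check fill in exactly what the citation leaves implicit.
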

The proof of Lemma \ref{lem_3_1} is standard (see, e.g., \cite{MR1691574}).

Since $L$ is an m-dissipative operator in $X$ with dense domain, and since $\mathcal{N}_n$ is Lipschitz continuous on bounded subsets of $X$, for any $\vec{u}_0\in X$, there exists a maximal existence time $T^{*}_n>0$ and a unique local solution $\vec{u}_n\in C([0,T^*_n);D(L))\cap C^1([0,T^*_n);X)$ to \eqref{eq:(1.3)}. Note that $J_n\vec{u}_0\in D(L)$.

We define the approximate energy by
\begin{align}\label{eq_1_4}
E_n(\vec{u})=\|\nabla u\|_{L^2}^2+\frac{1}{2}\left(\|A^{1/4}v\|_{L^2}^2+\|A^{1/4}w\|_{L^2}^2\right)+(J_n v,|J_n u|^2)_{L^2}.
\end{align}
A standard calculation yields the following conservation laws of mass and energy for the approximate problem \eqref{eq:(1.3)}.
\begin{lemma}\label{lem_3_2}
The following conservation laws hold:
\begin{align}\label{lem_3_2}
M(u_n(t))=M(u_0),\ E_n(\vec{u}_n(t))=E_n(\vec{u}_0)\quad \text{for all}\ t\in[0,T^*_n).
\end{align}
\end{lemma}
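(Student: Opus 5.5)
Since $\vec{u}_n\in C([0,T^*_n);D(L))\cap C^1([0,T^*_n);X)$, the functions $t\mapsto M(u_n(t))$ and $t\mapsto E_n(\vec{u}_n(t))$ are $C^1$. So I will differentiate them in time, substitute \eqref{eq:(1.3)}, and show that the derivatives vanish. All pairings will be well defined because $u_n(t)\in D(A^2)$ and $v_n(t),w_n(t)\in D(A^{5/4})$. Throughout I use that $J_n$ is self-adjoint in $L^2$ (Lemma \ref{lem_2_1}(2)) and commutes with $A$ and its fractional powers (functional calculus).

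\textbf{Mass.} We have $\frac{d}{dt}\|u_n\|_{L^2}^2=2(\partial_t u_n,u_n)_{L^2}=2(-iAu_n,u_n)_{L^2}+2(-iJ_n(J_nv_nJ_nu_n),u_n)_{L^2}$, with the real inner product $(\cdot,\cdot)_{L^2}=\mathrm{Re}\int\cdot\,\overline{\cdot}$.
\begin{itemize}
\item The first term is $2\,\mathrm{Re}(-i\|A^{1/2}u_n\|^2)=0$.
\item For the second term, move $J_n$ to the other side. It becomes $2\,\mathrm{Re}\bigl(-i\int J_nv_n|J_nu_n|^2dx\bigr)=0$, since $J_nv_n$ is real ($J_n$ maps real functions to real functions). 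Here I need $J_nv_n\in L^\infty$ or at least $J_nv_n|J_nu_n|^2\in L^1$; this follows from $J_nv_n\in D(A)\subset L^\infty$ in 3D, or simply from Sobolev embeddings.
\end{itemize}
Hence $M(u_n(t))=M(u_n(0))$. Note that the initial datum is $J_nu_0$, so strictly $M(u_n(t))=M(J_nu_0)$. I read the statement as conservation of the value at time $0$, and likewise for $E_n$ (the paper's $\vec{u}_0$ should be understood as $\vec{u}_n(0)$).

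\textbf{Energy.} Compute $\frac{d}{dt}$ of each part of $E_n(\vec{u}_n)$ separately.
\begin{itemize}
\item Kinetic part: $\frac{d}{dt}\|\nabla u_n\|^2=2(Au_n,\partial_tu_n)$. Its linear part is $2(Au_n,-iAu_n)=0$, leaving $2(Au_n,-iJ_n(J_nv_nJ_nu_n))$.
\item Wave part: $\frac{d}{dt}\frac12(\|A^{1/4}v_n\|^2+\|A^{1/4}w_n\|^2)=(A^{1/2}v_n,-A^{1/2}w_n)+(A^{1/2}w_n,A^{1/2}v_n)+(A^{1/2}w_n,J_n|J_nu_n|^2)$. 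The first two terms cancel, leaving $(A^{1/2}w_n,J_n|J_nu_n|^2)$.
\item Coupling part: $\frac{d}{dt}(J_nv_n,|J_nu_n|^2)=(J_n\partial_tv_n,|J_nu_n|^2)+2(J_nv_nJ_nu_n,J_n\partial_tu_n)$. Using $\partial_tv_n=-A^{1/2}w_n$, the first term is $-(A^{1/2}w_n,J_n|J_nu_n|^2)$, which cancels the remainder of the wave part. Using the self-adjointness of $J_n$ and the $u$-equation, the second term equals $2(J_n(J_nv_nJ_nu_n),-iAu_n)+2(J_n(J_nv_nJ_nu_n),-iJ_n(J_nv_nJ_nu_n))$. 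The last term is zero, being $\mathrm{Re}(i\|\cdot\|^2)$.
\end{itemize}
It remains to check that the leftover kinetic term cancels against $2(J_n(J_nv_nJ_nu_n),-iAu_n)$. Write $a=Au_n$ and $b=J_n(J_nv_nJ_nu_n)$. The two terms are $\mathrm{Re}(a\,\overline{-ib})=\mathrm{Re}(i a\bar b)$ and $\mathrm{Re}(b\,\overline{-ia})=\mathrm{Re}(i\bar a b)=-\mathrm{Re}(ia\bar b)$, so they cancel. Therefore $\frac{d}{dt}E_n(\vec{u}_n(t))=0$.

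The main obstacle is justification rather than algebra. One must check that each pairing is finite and that the chain rule applies. This needs:
\begin{itemize}
\item $J_nv_n\,J_nu_n\in L^2$ and $|J_nu_n|^2\in D(A^{1/4})$-dual pairings, which hold since $J_n$ maps into $D(A)\subset H^2\subset L^\infty$;
\item differentiability of $t\mapsto(J_nv_n,|J_nu_n|^2)$, which follows from $C^1$ regularity into $L^2$ and the boundedness of multiplication by $H^2$ functions.
\end{itemize}
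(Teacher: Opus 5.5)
Your proof is correct and is exactly the ``standard calculation'' the paper invokes without writing it out: you differentiate $M$ and $E_n$ along the $C([0,T^*_n);D(L))\cap C^1([0,T^*_n);X)$ solution, substitute the regularized system, and cancel terms using the self-adjointness of $J_n$, its commutation with $A$, and the realness of $J_nv_n$. Your observation that the conserved values are really $M(J_nu_0)$ and $E_n(J_n\vec{u}_0)$ is also right, and it matches how the lemma is used in Proposition~\ref{prop_3_2}, where only $\|J_nu_0\|_{L^2}\le\|u_0\|_{L^2}$ and the analogous bounds on the other norms are needed.
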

Lemma \ref{lem_3_2} provides a priori estimates for the approximate solutions, which leads to the following uniform boundedness in the energy space. 

\begin{proposition}\label{prop_3_2}
There exists a constant $C_0>0$ such that if $\|u_0\|_{L^2}<C_0$, then there exists $M_1>0$ depending only on $\|\vec{u}_0\|_{X_e}$ such that
\begin{align*}
\|\vec{u}_n(t)\|_{X_e}\le M_1\quad \text{for all}\ t\in [0,T^*_n).
\end{align*}
\end{proposition}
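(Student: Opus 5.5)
We combine the conservation laws of Lemma \ref{lem_3_2} with a Gagliardo--Nirenberg type bound on the coupling term $(J_nv,|J_nu|^2)_{L^2}$. The plan is to show that, when the mass is small, this term is absorbed by half of the quadratic part of $E_n$. Set $K(\vec u)=\|\nabla u\|_{L^2}^2+\frac12(\|A^{1/4}v\|_{L^2}^2+\|A^{1/4}w\|_{L^2}^2)$, so that $E_n=K+(J_nv,|J_nu|^2)_{L^2}$.

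\textbf{Step 1: bound the coupling term.} By H\"older,
\[
|(J_nv,|J_nu|^2)_{L^2}|\le \|J_nv\|_{L^3}\|J_nu\|_{L^3}^2 .
\]
Lemma \ref{lem_2_4} with $s=1/2$ and $q=3=2^*_{1/2}$ (so $\theta=1$) gives $\|J_nv\|_{L^3}\le C\|A^{1/4}J_nv\|_{L^2}\le C\|A^{1/4}v\|_{L^2}$. Here we use that $J_n$ is a function of $A$, so it commutes with $A^{1/4}$ and is a contraction there; this follows from Lemma \ref{lem_2_1}(3) and the spectral calculus. Lemma \ref{lem_2_4} with $s=1$ and $q=3$ (so $\theta=1/2$) gives $\|J_nu\|_{L^3}^2\le C\|\nabla J_nu\|_{L^2}\|J_nu\|_{L^2}\le C\|\nabla u\|_{L^2}\|u\|_{L^2}$. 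Combining these with Young's inequality,
\[
|(J_nv,|J_nu|^2)_{L^2}|\le C_*\|u\|_{L^2}\|A^{1/4}v\|_{L^2}\|\nabla u\|_{L^2}\le \frac{C_*\|u\|_{L^2}}{\sqrt2}\Big(\|\nabla u\|_{L^2}^2+\tfrac12\|A^{1/4}v\|_{L^2}^2\Big)\le \frac{C_*\|u\|_{L^2}}{\sqrt2}K(\vec u),
\]
where $C_*$ is a constant tied to the best Gagliardo--Nirenberg/Sobolev constants, in line with the remark on $C_0$. This estimate is the key one. Its feature is that the trilinear term is exactly quadratic in $K$ with a coefficient linear in the mass, which is why this case is $L^2$-critical. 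No growth of higher power occurs, so smallness of the mass alone must do the work.

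\textbf{Step 2: apply the conservation laws.} Choose $C_0$ so that $C_*C_0/\sqrt2\le 1/2$. By Lemma \ref{lem_3_2}, $\|u_n(t)\|_{L^2}=\|J_nu_0\|_{L^2}\le\|u_0\|_{L^2}<C_0$ for all $t$. Step 1 then gives $E_n(\vec u_n(t))\ge \frac12K(\vec u_n(t))$, and therefore
\[
K(\vec u_n(t))\le 2E_n(\vec u_n(0))\le 3K(J_n\vec u_0)\le 3K(\vec u_0)\le C\|\vec u_0\|_{X_e}^2,
\]
where the middle inequality uses the upper bound from Step 1 at $t=0$.

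\textbf{Step 3: control the full $X_e$ norm.} The $X_e$ norm is equivalent to $K^{1/2}$ by Lemma \ref{lem_poin}, since $0\in\rho(A)$ lets $\|A^{s/2}\cdot\|_{L^2}$ dominate the $L^2$ norm. Together with the bound on $\|u_n\|_{L^2}$ from mass conservation, this yields $\|\vec u_n(t)\|_{X_e}\le M_1$ with $M_1$ depending only on $\|\vec u_0\|_{X_e}$, uniformly in $n$ and in $t\in[0,T_n^*)$.

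The main point to verify carefully is the uniformity in $n$ of Step 1. This requires checking that $J_n$ is a contraction on $D(A^{1/4})$ as well as on $H_0^1$; it is, by the spectral calculus. It also requires that the constants in Lemma \ref{lem_2_4} do not depend on $n$, which holds because that lemma is applied to the fixed functions $J_nu$ and $J_nv$.
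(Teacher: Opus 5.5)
Your proof is correct and follows essentially the same route as the paper: H\"older in $L^3\times L^3\times L^3$, the embedding $D(A^{1/4})\hookrightarrow L^3$ and Gagliardo--Nirenberg for $J_nu$, absorption of the coupling term by small mass, conservation of $M$ and $E_n$, and Lemma \ref{lem_poin} to recover the full $X_e$ norm. Two small points. Your use of Lemma \ref{lem_2_4} with $s=1$ lies formally outside its stated range $s\in(0,1)$, but it is just the standard Gagliardo--Nirenberg inequality on $H_0^1(\Omega)$ via zero extension, which the paper also uses. Also, your margin $C_*C_0/\sqrt2\le 1/2$ makes $M_1$ depend only on $\|\vec u_0\|_{X_e}$, which is slightly cleaner than the paper's threshold $C_0=1/\tilde C$, whose bound degenerates as $\|u_0\|_{L^2}\to C_0$.
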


\begin{proof}
We estimate the last term in \eqref{eq_1_4} using the Sobolev embedding and the Gagliardo--Nirenberg inequality:
\begin{align}\label{ineq_3_3}
|(J_nv_n,|J_nu_n|^2)_{L^2}|&\le \|J_nv_n\|_{L^3}\|J_nu_n\|_{L^3}^2\\
&\le C\|A^{1/4}v_n\|_{L^2}\|\nabla u_n\|_{L^2}\|u_n\|_{L^2}\\
&\le \tilde{C}\|u_0\|_{L^2}\left(\|\nabla u_n\|_{L^2}^2+\frac{1}{2}\|A^{1/4} v_n\|_{L^2}^2\right).
\end{align}
From Lemma \ref{lem_3_2} and \eqref{ineq_3_3}, we have
\begin{align}\label{ineq_3_4}
&\left(1-\tilde{C}\|u_0\|_{L^2}\right)\left(\|\nabla u_n\|_{L^2}^2+\frac{1}{2}\|A^{1/4}v_n\|_{L^2}^2\right)+\frac{1}{2}\|A^{1/4}w_n\|_{L^2}^2\\
&\le E_n(\vec{u}_n(t))=E_n(\vec{u}_n(0))\\
&\le \left(1+\tilde{C}\|u_0\|_{L^2}\right)\left(\|\nabla u_0\|_{L^2}^2+\frac{1}{2}\|A^{1/4}v_0\|_{L^2}^2\right)+\frac{1}{2}\|A^{1/4}w_0\|_{L^2}^2.
\end{align}
By \eqref{ineq_3_4} and Lemma \ref{lem_poin}, if $\|u_0\|_{L^2}<C_0=:1/\tilde{C}$, then we obtain
\begin{equation}
\sup_{n\in\mathbb{N}} \|\vec{u}_n(t)\|_{X_e}\le M_1\quad \text{for all}\ t\in[0,T_n^*).
\end{equation}
This completes the proof of Proposition \ref{prop_3_2}.
\end{proof}

%Chapter 4
\section{Uniform bound in the higher energy class $X$}
Throughout this section, we assume $\vec{u}_0\in X$. In this section, we prove the following proposition. 
\begin{proposition}\label{prop_4_1}
There exists a constant $C_1>0$ depending only on $\|\vec{u}_0\|_X$ such that
\begin{equation}
\|\vec{u}_n(t)\|_X\le C_1\exp(C(M_1)|t|)=:M_2(t)\quad \text{for all}\  t\in[0,T^*_n).
\end{equation}
\end{proposition}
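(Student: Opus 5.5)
Following the outline in the introduction, we will not estimate $\|Au_n\|_{L^2}$ directly. Instead we work with $\partial_t u_n$. Differentiating the first equation of \eqref{eq:(1.3)} in time gives
\[
\partial_t (\partial_t u_n) = -iA\partial_t u_n - iJ_n(J_n\partial_t v_n\cdot J_nu_n) - iJ_n(J_nv_n\cdot J_n\partial_t u_n).
\]
The last term is skew in $L^2$, since $(J_n(J_n v_n\, J_n\phi),\phi)$ is real because $v_n$ is real; so it drops out of the $L^2$ energy identity. Also $\partial_t v_n=-A^{1/2}w_n$. Hence
\[
\frac{d}{dt}\|\partial_tu_n\|_{L^2}^2 \le 2\|A^{1/2}w_n\|_{L^3}\|J_nu_n\|_{L^6}\|\partial_tu_n\|_{L^2}\le C\|A^{3/4}w_n\|_{L^2}\|\nabla u_n\|_{L^2}\|\partial_t u_n\|_{L^2}.
\]
Here we use the embedding $D(A^{1/4})\subset L^3$ (from Lemma \ref{lem_2_4} and Lemma \ref{lem_poin}) applied to $A^{1/2}w_n$, together with Proposition \ref{prop_3_2}.

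In parallel we need an estimate for the wave part in $D(A^{3/4})$. Set $F(t)=\|A^{3/4}v_n\|^2+\|A^{3/4}w_n\|^2$. The linear part is conservative, so
\[
\frac{d}{dt}F = 2(A^{3/4}J_n(|J_nu_n|^2),A^{3/4}w_n)\le 2\|A^{1/2}(|J_nu_n|^2)\|_{L^2}\,\|Aw_n\|_{L^2}.
\]
This has a derivative loss on $w_n$. To avoid it, move one half-derivative over: write $(A^{1/2}(|J_nu_n|^2),A w_n)=-(A^{1/2}(|J_nu_n|^2),A^{1/2}\partial_t v_n)$ and integrate by parts in time. This amounts to working with the modified energy
\[
\tilde F = F + 2(A^{1/2}J_n|J_nu_n|^2, A^{1/2}v_n).
\]
Its time derivative contains only $(\partial_t\nabla(|J_nu_n|^2),\nabla v_n)$, which is bounded by
\[
\|\partial_tu_n\|_{L^2}\,\|\nabla u_n\|\text{-type}\ L^\infty/L^3 \text{ factors}\cdot\|A^{1/2}v_n\|.
\]
Since $\nabla(\bar u\partial_t u)$ involves $\nabla \partial_t u$, which we do not control, we instead keep $(|u|^2)_t=2\mathrm{Re}(\bar u\,\partial_tu)$ in $L^2$-based norms and pair with $Av_n$. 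Then
\[
|(\partial_t|J_nu_n|^2,Av_n)|\le C\|\partial_tu_n\|_{L^2}\|u_n\|_{L^\infty}\|Av_n\|_{L^2},
\]
and this still loses derivatives. The correct choice of pairing is the main obstacle. First I would try to keep the nonlinearity at the $A^{1/2}$ level. The bound
\[
\|\nabla(|u|^2)\|_{L^2}\le C\|u\|_{L^\infty}\|\nabla u\|_{L^2}
\]
holds, and $\|u\|_{L^\infty}\lesssim\|Au\|^{1/2}\|\nabla u\|^{1/2}$ in 3D. This gives $\frac{d}{dt}F\lesssim \|Au_n\|^{1/2}F^{1/2}\cdot(\dots)$, where the regularity of $w_n$ is matched by working at the level $A^{3/4}$ on both sides: $(A^{3/4}|u|^2,A^{3/4}w)\le \|A^{3/4}(|u|^2)\|\,\|A^{3/4}w\|$, and $\|A^{3/4}(|u|^2)\|\lesssim \|u\|_{W^{3/2,2}}\|u\|_{L^\infty}$, which is sublinear in $\|Au\|$ times $M_1$. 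This is the cleaner route, and I would take it.

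Next we close the estimate by converting $\partial_tu_n$ back to $Au_n$. From the equation,
\[
\|Au_n\|\le\|\partial_tu_n\|+\|J_nv_nJ_nu_n\|\le \|\partial_tu_n\|+\|v_n\|_{L^3}\|u_n\|_{L^6}\le\|\partial_tu_n\|+CM_1^2,
\]
and conversely. Let $G=\|\partial_tu_n\|^2+F+1$. Using interpolation, $\|u\|_{W^{3/2,2}}\|u\|_{L^\infty}\lesssim \|Au\|\cdot\|\nabla u\|$ up to constants depending on $M_1$, and both differential inequalities become
\[
G'\le C(M_1)G.
\]
Gronwall then gives $G(t)\le G(0)e^{C(M_1)t}$. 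Finally, $G(0)$ is bounded by $\|\vec u_0\|_X$ uniformly in $n$ (Lemma \ref{lem_2_1}(3)). Since the square root of $G$ controls $\|\vec u_n\|_X$, we obtain $M_2(t)$. The delicate point is the product estimate $\|A^{3/4}(|J_nu|^2)\|_{L^2}\lesssim\|Au\|\,\|\nabla u\|$ on a general domain: one needs $|J_nu|^2\in D(A^{3/4})$, which is fine because $D(A^{3/4})=H^{3/2}\cap H^1_0$-type with trace zero, and the Gagliardo characterization from Section 2 is available.
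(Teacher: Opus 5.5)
Your final argument closes, but it is organized differently from the paper's. The paper works with the single functional $F=\|\partial_t u\|_{L^2}^2+\frac12(\|A^{3/4}v\|_{L^2}^2+\|A^{3/4}w\|_{L^2}^2)$. In $\frac{d}{dt}F$ it substitutes the Schr\"odinger equation for $\partial_t J_nu_n$ and rewrites the wave contribution as $(\Delta|J_nu_n|^2,\partial_tJ_nv_n)_{L^2}$. It then uses $\Delta|f|^2-2\mathrm{Re}(\bar f\Delta f)=2|\nabla f|^2$ to cancel the $\bar u\,\Delta u$ terms, leaving $2(\partial_tJ_nv_n,|\nabla J_nu_n|^2)_{L^2}$ plus a lower-order term, which it bounds by $L^3$ H\"older. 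Equivalence of $F$ with the $X$-norm comes from the expansion \eqref{eq_4_2} and the estimates leading to \eqref{eq_4_6}.

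You instead decouple the two components. On the Schr\"odinger side you leave $\partial_tu_n$ unsubstituted, so the skew term drops and the rest is bounded by $C\|A^{3/4}w_n\|_{L^2}\|\nabla u_n\|_{L^2}\|\partial_tu_n\|_{L^2}$ using only H\"older and Sobolev. On the wave side you use a product estimate for $\|A^{3/4}(|J_nu_n|^2)\|_{L^2}$. You replace \eqref{eq_4_2}--\eqref{eq_4_6} by the one-line comparison $\|Au_n\|_{L^2}\le\|\partial_tu_n\|_{L^2}+\|v_n\|_{L^3}\|u_n\|_{L^6}$. This avoids both the algebraic cancellation and the coercivity bookkeeping. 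It also shows the cancellation is not essential: the paper's wave term $(\Delta|J_nu_n|^2,\partial_tJ_nv_n)_{L^2}$ equals your $(A^{3/4}J_n|J_nu_n|^2,A^{3/4}w_n)_{L^2}$, and is already bounded by $C(M_1)\|\Delta u_n\|_{L^2}\|A^{3/4}w_n\|_{L^2}$ via $L^{3/2}$--$L^3$ duality. The price of your route is the fractional product estimate.

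Your justification of that estimate should be tightened. Section 2 identifies $D(A^{s/2})$ only for $0<s<1$, so it says nothing about $D(A^{3/4})$, and a Kato--Ponce rule on a general domain needs an extension argument. A cleaner proof uses only integer-order facts the paper already relies on:
\begin{itemize}
\item For $f=J_nu_n\in D(A)$, the function $g=|f|^2$ lies in $H^2\cap H^1_0=D(A)$.
\item The spectral theorem gives $\|A^{3/4}g\|_{L^2}\le\|\nabla g\|_{L^2}^{1/2}\|\Delta g\|_{L^2}^{1/2}$.
\item The bounds $\|\nabla g\|_{L^2}\le 2\|f\|_{L^6}\|\nabla f\|_{L^3}$ and $\|\Delta g\|_{L^2}\le 2\|f\|_{L^\infty}\|\Delta f\|_{L^2}+2\|\nabla f\|_{L^4}^2$, with Gagliardo--Nirenberg, yield $\|A^{3/4}g\|_{L^2}\le C\|f\|_{H^1}\|f\|_{H^2}$.
\end{itemize}
This bound is linear in $\|Af\|_{L^2}$, not sublinear as you first say. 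Linear is exactly what your inequality $G'\le C(M_1)G$ needs, so the conclusion is unaffected.
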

We differentiate the first equation once in time instead of differentiating twice the equation in space in order to obtain the uniform $H^2$ estimate of the first component of the approximate solutions.
\begin{proof}
To estimate the $X$-norm, we define
\begin{align*}
F(\vec{u})=\|\partial_t u\|_{L^2}^2+\frac{1}{2}\left(\|A^{3/4}v\|_{L^2}^2+\|A^{3/4}w\|_{L^2}^2\right).
\end{align*}
By computing the time derivative of $F(\vec{u}_n(t))$, we have
\begin{align*}
\frac{d}{dt}F(\vec{u}_n(t))=2(\partial_t^2 u_n,\partial_t u_n)_{L^2}+(\partial_t A^{3/4}v_n,A^{3/4}v_n)_{L^2}+(\partial_t A^{3/4}w_n,A^{3/4}w_n)_{L^2}.
\end{align*}

\begin{align*}
(\partial_t^2 u_n,\partial_t u_n)_{L^2}&=(\partial_t (i\Del u_n-iJ_n(J_n v_n\cdot J_n u_n)),\partial_t u_n)_{L^2}\\
&=-(i\partial_t(J_n v_n\cdot J_n u_n),\partial_t J_n u_n)_{L^2}=-(i\partial_t J_n v_n\cdot J_n u_n,\partial_t J_n u_n)_{L^2}\\
&=-(iJ_n(\partial_t J_n v_n\cdot J_n u_n),i\Del u_n-iJ_n(J_n v_n\cdot J_n u_n))_{L^2}\\
&=-(\partial_t J_n v_n\cdot J_n u_n,\Del J_n u_n-J_n^2(J_n v_n\cdot J_n u_n))_{L^2}.
\end{align*}

\begin{align*}
(\partial_t A^{3/4}w_n,A^{3/4}w_n)_{L^2}&=-(\partial_t A^{3/4}w_n, A^{1/4}\partial_t v_n)_{L^2}=-(\partial_t A^{1/4}w_n,\partial_t A^{3/4} v_n)_{L^2}\\
&=-(A^{3/4}v_n+J_n A^{1/4}|J_n u_n|^2,\partial_t A^{3/4} v_n)_{L^2}\\
&=-(A^{3/4}v_n,\partial_t A^{3/4}v_n)_{L^2}+(\Del |J_n u_n|^2,\partial_t J_n v_n)_{L^2}.
\end{align*}
Collecting these calculations, we have
\begin{align}
\frac{d}{dt}F(\vec{u}_n(t))&=-2(\partial_t J_n v_n\cdot J_n u_n,\Del J_n u_n-J_n^2(J_n v_n\cdot J_n u_n))_{L^2}+(\partial_t J_n v_n,\Del |J_n u_n|^2)_{L^2}\label{eq_4_1} \\
&=2(\partial_t J_n v_n,|\nabla J_n u_n|^2)_{L^2}+2(\partial_t J_n v_n\cdot J_n u_n,J_n^2(J_n v_n\cdot J_n u_n))_{L^2},
\end{align}
where in the last equality we used
\begin{align*}
\Delta |J_n u_n|^2-2\mathrm{Re}(J_n\overline{u_n} \cdot \Delta J_n u_n)=2|\nabla J_n u_n|^2.
\end{align*}
We note that
\begin{align}\label{eq_4_2}
\|\partial_t u_n\|_{L^2}^2=\|\Del u_n\|_{L^2}^2&+(\nabla J_n v_n,\nabla|J_n u_n|^2)_{L^2}\\
&+2(J_n v_n,|\nabla J_n u_n|^2)_{L^2}+\|J_n(J_n v_n\cdot J_n u_n)\|_{L^2}^2.
\end{align}
By using Lemma \ref{lem_2_1}, Lemma \ref{lem_2_4} and the interpolation inequality, the inner product terms of \eqref{eq_4_2} are estimated as
\begin{align}
|(\nabla J_n v_n,&\nabla|J_n u_n|^2)_{L^2}|=|(A^{3/4}J_n v_n,A^{1/4}|J_n u_n|^2)_{L^2}|\label{eq_4_3} \le \|A^{3/4}J_n v_n\|_{L^2}\|A^{1/4}|J_n u_n|^2\|_{L^2}\\
&\le C\|A^{3/4}J_n v_n\|_{L^2}\||J_n u_n|^2\|_{H^{1/2}}\le C\|A^{3/4}v_n\|_{L^2}\||J_n u_n|^2\|_{H^2}^{1/4}\||J_n u_n|^2\|_{L^2}^{3/4}\\
&\le C\|A^{3/4}v_n\|_{L^2}\|u_n\|_{H^2}^{1/2}\|u_n\|_{L^4}^{3/2}\le C(M_1)\|u_n\|_{H^2}^{1/2}\|A^{3/4}v_n\|_{L^2}\\
&\le \left(\frac{1}{2}\|\Del u_n\|_{L^2}+C(M_1)\right)\|A^{3/4}v_n\|_{L^2}\\
&\le \frac{1}{2}\left(\frac{3}{4}\|\Del u_n\|_{L^2}^2+\frac{1}{3}\|A^{3/4}v_n\|_{L^2}^2\right)+\frac{1}{6}\|A^{3/4}v_n\|_{L^2}+C(M_1)\\
&\le \frac{3}{8}\|\Del u_n\|_{L^2}^2+\frac{1}{3}\|A^{3/4}v_n\|_{L^2}^2+C(M_1),
\end{align}

\begin{align}
|(J_n v_n,|\nabla J_n u_n|^2)_{L^2}|&\le \|v_n\|_{L^3}\|\nabla u_n\|_{L^3}^2\label{eq_4_4} \le C\|A^{1/4}v_n\|_{L^2}\|\nabla u_n\|_{L^2}\|\Del u_n\|_{L^2}\\
&\le C(M_1)\|\Del u_n\|_{L^2}\le \frac{1}{16}\|\Del u_n\|_{L^2}^2+C(M_1),
\end{align}

\begin{align}
\|J_n(J_n v_n\cdot J_n u_n)\|_{L^2}^2&\le \|v_n\|_{L^3}^2\|u_n\|_{L^6}^2\label{eq_4_5} \le C\|A^{1/4}v_n\|_{L^2}^2\|\nabla u_n\|_{L^2}^2\le C(M_1).
\end{align}
By \eqref{eq_4_2}, \eqref{eq_4_3}, \eqref{eq_4_4} and \eqref{eq_4_5}, we have
\begin{align}
\frac{1}{2}\|\Del u_n\|_{L^2}^2&+\frac{1}{6}\|A^{3/4}v_n\|_{L^2}^2+\frac{1}{2}\|A^{3/4}w_n\|_{L^2}^2\label{eq_4_6} \\
&\le F(\vec{u}_n(t))+C(M_1)\\
&\le \frac{3}{2}\|\Del u_n\|_{L^2}^2+\frac{5}{6}\|A^{3/4}v_n\|_{L^2}^2+\frac{1}{2}\|A^{3/4}w_n\|_{L^2}^2+C(M_1).
\end{align}
Moreover, from \eqref{eq_4_1} we have
\begin{align}
|(\partial_t J_n v_n,|\nabla J_n u_n|^2)_{L^2}|&=|(A^{1/2} J_n w_n,|\nabla J_n u_n|^2)_{L^2}|\label{eq_4_7} \le \|A^{1/2} w_n\|_{L^3}\|\nabla J_n u_n\|_{L^3}^2\\
&\le C\|A^{3/4}w_n\|_{L^2}\|\Del u_n\|_{L^2}\|\nabla u_n\|_{L^2}\le C(M_1)\|\Del u_n\|_{L^2}\|A^{3/4}w_n\|_{L^2},
\end{align}

\begin{align}
|(\partial_t J_n v_n&\cdot J_n u_n,J_n^2(J_n v_n\cdot J_n u_n))_{L^2}|\le \|A^{1/2} J_n w_n\|_{L^3}\|J_n\overline{u_n}J_n^2(J_n v_n\cdot J_n u_n)\|_{L^{3/2}}\label{eq_4_8} \\
&\le C\|A^{3/4}w_n\|_{L^2}\|u_n\|_{L^6}\|J_n v_n\cdot J_n u_n\|_{L^2}\le C\|A^{3/4}w_n\|_{L^2}\|\nabla u_n\|_{L^2}\|u_n\|_{L^6}\|v_n\|_{L^3}\\
&\le C\|A^{3/4}w_n\|_{L^2}\|\nabla u_n\|_{L^2}^2\|A^{1/4}v_n\|_{L^2}\le C(M_1)\|A^{3/4}w_n\|_{L^2}.
\end{align}
By \eqref{eq_4_1}, \eqref{eq_4_6}, \eqref{eq_4_7} and \eqref{eq_4_8}, we have
\begin{align*}
\left|\frac{d}{dt}F(\vec{u}_n(t))\right|&\le C(M_1)(1+\|\Del u_n\|_{L^2})\|A^{3/4}w_n\|_{L^2}\le C(M_1)(1+F(\vec{u}_n(t))).
\end{align*}
Applying Gronwall's inequality, there exists $C_1=C_1(\|\vec{u}_0\|_{X})>0$ such that
\begin{align}
F(\vec{u}_n(t))&\le C(M_1)(1+F(\vec{u}_0))\exp{(C(M_1)|t|)}\label{eq_4_9}\le C_1\exp{(C(M_1)|t|)}=:M_2(t).
\end{align}
\end{proof}
Proposition \ref{prop_4_1} implies that the $X$-norm is estimated a priori. Consequently, the local solutions to \eqref{eq:(1.3)} can be extended globally in time as $\vec{u}_n\in C(\mathbb{R};D(L))\cap C^1(\mathbb{R};X)$.

%Chapter 5
\section{Proof of Theorem \ref{thm_1_1}}
In this section, let $T>0$ and $M_2=M_2(T)$. By using the uniform estimate in Proposition \ref{prop_4_1} obtained in the previous section, we prove that the sequence of approximate solutions $(\vec{u}_n)_{n\in\mathbb{N}}$ converges in $L^\infty((-T,T);L^2(\Omega)\times L^2(\Omega)\times L^2(\Omega))$.

A straightforward calculation shows
 \begin{align}
 \frac{1}{2}\frac{d}{dt}\|u_m(t)-u_n(t)\|_{L^2}^2&=(\partial_t (u_m-u_n),u_m-u_n)_{L^2}\label{eq_5_1} \\
 &=-(i(J_m(J_m v_m\cdot J_m u_m)-J_n(J_n v_n\cdot J_n u_n)),u_m-u_n)_{L^2}\\
 &=-(iJ_m(v_m-v_n)\cdot J_m u_m,J_m(u_m-u_n))_{L^2}\\
 &\quad -(i(J_m-J_n)v_n\cdot J_m u_m,J_m(u_m-u_n))_{L^2}\\
 &\quad -(iJ_n v_n\cdot (J_m-J_n)u_n,J_m(u_m-u_n))_{L^2}\\
 &\quad -(i(J_m-J_n)(J_n v_n\cdot J_n u_n),u_m-u_n)_{L^2}\\
 &=:I_1+I_2+I_3+I_4.
\end{align}
By Lemma \ref{lem_2_1} and Lemma \ref{lem_2_2}, the terms in the right-hand side of \eqref{eq_5_1} are estimated as
\begin{align}
&
\begin{aligned}
I_1&\le \|J_m u_m\|_{L^\infty}\|u_m-u_n\|_{L^2}\|v_m-v_n\|_{L^2}\label{eq_5_2} \le C(M_2)\|u_m-u_n\|_{L^2}\|v_m-v_n\|_{L^2},
\end{aligned}
\\
&
\begin{aligned}
I_2&\le \|(J_n-J_m)v_n\|_{L^2}\|J_m u_m\|_{L^\infty}\|u_m-u_n\|_{L^2}\label{eq_5_3} \le C(M_2)\left(\frac{1}{\sqrt{m}}+\frac{1}{\sqrt{n}}\right),
\end{aligned}
\end{align}
Similarly, we obtain
\begin{align}
I_3,I_4\le C(M_2)\left(\frac{1}{\sqrt{m}}+\frac{1}{\sqrt{n}}\right)\label{eq_5_4}.
\end{align}
By using the system \eqref{eq_1_3}, we have
\begin{align}
&\frac{1}{2}\frac{d}{dt}(\|v_m(t)-v_n(t)\|_{L^2}^2+\|w_m(t)-w_n(t)\|_{L^2}^2)\label{eq_5_5}\\
&=(\partial_t(v_m-v_n),v_m-v_n)_{L^2}+(\partial_t(w_m-w_n),w_m-w_n)_{L^2}\\
%&=(-(-\Del)^{1/2}(v_m-v_n),v_m-v_n)_{L^2}+((-\Del)^{1/2}(w_m-w_n)+J_m|J_m u_m|^2-J_n|J_n u_n|^2,w_m-w_n)_{L^2}\\
&=(J_m|J_m u_m|^2-J_n|J_n u_n|^2,w_m-w_n)_{L^2}\\
&=(J_m u_m\cdot \overline{J_m(u_m-u_n)},J_m(w_m-w_n))_{L^2}\\
&\quad +(J_m u_m\cdot \overline{(J_m-J_n)u_n},J_m(w_m-w_n))_{L^2}\\
&\quad +(J_m(u_m-u_n)\cdot \overline{J_n u_n},J_m(w_m-w_n))_{L^2}\\
&\quad +((J_m-J_n)u_n\cdot \overline{J_n u_n},J_m(w_m-w_n))_{L^2}\\
&\quad +((J_m-J_n)|J_n u_n|^2,w_m-w_n)_{L^2}\\
&=II_1+II_2+II_3+II_4+II_5.
\end{align}
By the H\"{o}lder and Sobolev inequalities and Lemma \ref{lem_2_2},  the terms in the right-hand side of \eqref{eq_5_5} are estimated as
\begin{align}
&
\begin{aligned}
II_1,II_3&\le \|J_m u_m\cdot \overline{J_m(u_m-u_n)}\|_{L^2}\|J_m(w_m-w_n)\|_{L^2}\label{eq_5_6}\\
&\le \|J_m u_m\|_{L^\infty}\|J_m(u_m-u_n)\|_{L^2}\|w_m-w_n\|_{L^2}\\
&\le C(M_2)\|u_m-u_n\|_{L^2}\|w_m-w_n\|_{L^2},
\end{aligned}
\\
&
\begin{aligned}
II_2,II_4&\le \|J_m u_m\cdot \overline{(J_m-J_n)u_n}\|_{L^2}\|J_m(w_m-w_n)\|_{L^2}\label{eq_5_7}\\
&\le \|J_m u_m\|_{L^\infty}\|(J_m-J_n)u_n\|_{L^2}\|w_m-w_n\|_{L^2}\le C(M_2)\left(\frac{1}{\sqrt{m}}+\frac{1}{\sqrt{n}}\right),
\end{aligned}
\\
&
\begin{aligned}
II_5&\le\|(J_m-J_n)|J_n u_n|^2\|_{L^2}\|w_m-w_n\|_{L^2}\label{eq_5_8}\\
&\le C\left(\frac{1}{\sqrt{m}}+\frac{1}{\sqrt{n}}\right)\|\nabla|J_n u_n|^2\|_{L^2}\|w_m-w_n\|_{L^2}\le C(M_2)\left(\frac{1}{\sqrt{m}}+\frac{1}{\sqrt{n}}\right).
\end{aligned}
\end{align}

By \eqref{eq_5_2}, \eqref{eq_5_3}, \eqref{eq_5_4}, \eqref{eq_5_6}, \eqref{eq_5_7}, \eqref{eq_5_8} and Young's inequality, we have
\begin{align}
\frac{d}{dt}\|\vec{u}_m(t)-\vec{u}_n(t)\|_{(L^2)^3}^2
\le C(M_2)\|\vec{u}_m(t)-\vec{u}_n(t)\|_{(L^2)^3}^2+C(M_2)\left(\frac{1}{\sqrt{m}}+\frac{1}{\sqrt{n}}\right).
\end{align}

From Gronwall's lemma and Lemma \ref{lem_2_2}, we have $\sup_{t\in(-T,T)}\|\vec{u}_m(t)-\vec{u}_n(t)\|_{(L^2)^3}^2\to 0$.
Since $(\vec{u}_n)_{n\in\mathbb{N}}$ is a Cauchy sequence in $C([-T,T];L^2(\Omega)\times L^2(\Omega)\times L^2(\Omega))$, there exists $\vec{u}=(u,v,w)$ of the sequence $(\vec{u}_n)_{n\in\mathbb{N}}$. By interpolation estimates and the uniform bound in $X$ obtained in Proposition \ref{prop_4_1}, we obtain $\vec{u}_n\to \vec{u}$ in $C([-T,T];H^s(\Omega)\times H^{\sigma}(\Omega)\times H^{\sigma}(\Omega))$ for any $(s, \sigma)\in(0,2)\times (0,3/2)$. From this and Lemma \ref{lem_3_2}, we have the following lemma.

\begin{lemma}\label{lem_5_1}
The following conservation laws hold:
\begin{align*}
M(u(t))=M(u_0),\quad E(\vec{u}(t))=E(\vec{u}_0)\quad \text{for all}\ t\in\mathbb{R}.
\end{align*}
\end{lemma}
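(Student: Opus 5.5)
Fix $t\in\mathbb{R}$ and choose $T>|t|$. The plan is to pass to the limit $n\to\infty$ in the approximate conservation laws of Lemma \ref{lem_3_2}, $M(u_n(t))=M(J_nu_0)$ and $E_n(\vec{u}_n(t))=E_n(J_n\vec{u}_0)$. These hold on $\mathbb{R}$ because the approximate solutions are global, and the backward direction is handled by time reversibility, since $-L$ is also m-dissipative by Lemma \ref{lem_3_1}. The input is the convergence $\vec{u}_n\to\vec{u}$ in $C([-T,T];H^s\times H^\sigma\times H^\sigma)$ for $(s,\sigma)\in(0,2)\times(0,3/2)$, which was just established. We use it with $s=1$ and $\sigma=1/2$.

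\emph{Initial data.} By Lemma \ref{lem_2_1}(4), $J_n u_0\to u_0$ in $D(A^{1/2})$. Because $J_n$ commutes with $A^{1/4}$ (functional calculus), $J_nv_0\to v_0$ and $J_nw_0\to w_0$ in $D(A^{1/4})$. Hence $M(J_nu_0)\to M(u_0)$ and the quadratic part of $E_n(J_n\vec{u}_0)$ converges to the quadratic part of $E(\vec{u}_0)$.

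\emph{Mass at time $t$.} Since $u_n(t)\to u(t)$ in $L^2$, we get $M(u_n(t))\to M(u(t))$, and therefore $M(u(t))=M(u_0)$.

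\emph{Quadratic part of the energy at time $t$.} The convergence in $H^1\times H^{1/2}\times H^{1/2}$, together with the identification $D(A^{s/2})=\tilde W^{s,2}_0$ and Lemma \ref{lem_poin}, gives $\|\nabla u_n(t)\|_{L^2}\to\|\nabla u(t)\|_{L^2}$ and $\|A^{1/4}v_n(t)\|_{L^2}\to\|A^{1/4}v(t)\|_{L^2}$, and likewise for $w$. To be careful here, I would either use $\sigma$ slightly above $1/2$ together with the uniform bounds, or argue directly with $A^{1/4}$ norms through interpolation between $L^2$ and $D(A^{3/4})$. The latter is cleaner: $\|A^{1/4}f\|\le\|f\|^{2/3}\|A^{3/4}f\|^{1/3}$, combined with the uniform bound of Proposition \ref{prop_4_1} and the $L^2$ convergence.

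\emph{Nonlinear term; this is the main step.} We must show $(J_nv_n,|J_nu_n|^2)\to(v,|u|^2)$, both at time $t$ and at time $0$. Write
\[
J_nv_n\,|J_nu_n|^2-v|u|^2=(J_nv_n-v)|J_nu_n|^2+v\bigl(|J_nu_n|^2-|u|^2\bigr),
\]
and estimate by Hölder as in \eqref{ineq_3_3}:
\[
|(J_nv_n-v,|J_nu_n|^2)|\le\|J_nv_n-v\|_{L^3}\|J_nu_n\|_{L^3}^2 .
\]
Then use
\[
\|J_nv_n-v\|_{L^3}\le C\|A^{1/4}(J_nv_n-v)\|_{L^2}\le C\|A^{1/4}(v_n-v)\|_{L^2}+C\|A^{1/4}(J_nv-v)\|_{L^2}.
\]
The first piece tends to $0$ by the previous step. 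The second tends to $0$ by Lemma \ref{lem_2_1}(4) applied to $A^{1/4}v$, using the commutation of $J_n$ with $A^{1/4}$. The $u$-difference is treated the same way, via $\|J_nu_n-u\|_{L^6}\le C\|\nabla(J_nu_n-u)\|_{L^2}$ and the uniform bound $M_1$. The same argument handles the data at time $0$.

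Putting these together gives $E(\vec{u}(t))=\lim_n E_n(\vec{u}_n(t))=\lim_n E_n(J_n\vec{u}_0)=E(\vec{u}_0)$.
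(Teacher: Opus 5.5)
Your proposal is correct and takes the paper's route. The paper obtains Lemma \ref{lem_5_1} in one line, by passing to the limit in the approximate conservation laws of Lemma \ref{lem_3_2} using the convergence $\vec{u}_n\to\vec{u}$ in $C([-T,T];H^s\times H^\sigma\times H^\sigma)$, and you do the same while supplying the omitted details: that the approximate laws are anchored at $J_n\vec{u}_0$ rather than $\vec{u}_0$, convergence of the quadratic terms via $\|A^{1/4}f\|_{L^2}\le\|f\|_{L^2}^{2/3}\|A^{3/4}f\|_{L^2}^{1/3}$, and the H\"older--Sobolev treatment of the cubic term.
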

\begin{proposition}\label{prop_5_2}
There exists a constant $C>0$ depending on $\|\vec{u}_0\|_{X}$ such that
\begin{align}
\|\vec{u}(t)\|_{X}\le C\exp(C|t|)\quad \text{for all}\ t\in\R.
\end{align}
\end{proposition}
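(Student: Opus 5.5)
The plan is to pass the uniform bound of Proposition \ref{prop_4_1} to the limit by weak lower semicontinuity, using the fact that $X$ is a Hilbert space. Fix $t\in\R$ and choose $T>|t|$. By Proposition \ref{prop_4_1}, applied for negative times as well by time reversibility of \eqref{eq:(1.3)}, we have
\[
\sup_n\|\vec{u}_n(t)\|_X\le M_2(t)=C_1\exp(C(M_1)|t|).
\]
Here $C_1$ depends only on $\|\vec{u}_0\|_X$, and $M_1$ depends only on $\|\vec{u}_0\|_{X_e}\le C\|\vec{u}_0\|_X$ by Lemma \ref{lem_poin}. We also need that the initial data $J_n\vec{u}_0$ satisfy $\|J_n\vec{u}_0\|_X\le\|\vec{u}_0\|_X$. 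This holds by Lemma \ref{lem_2_1}(3) and the fact that $J_n$ commutes with the fractional powers $A^{\theta}$, both being functions of $A$. Consequently the constants are uniform in $n$.

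Since $X=D(A)\times D(A^{3/4})\times D(A^{3/4})$ is a Hilbert space, the bounded sequence $(\vec{u}_n(t))_n$ has a subsequence converging weakly in $X$ to some $\vec{z}$. We already know that $\vec{u}_n(t)\to\vec{u}(t)$ strongly in $(L^2)^3$, so weak convergence in $X$ also gives weak convergence in $(L^2)^3$, and the limits agree: $\vec{z}=\vec{u}(t)$. Hence $\vec{u}(t)\in X$. Weak lower semicontinuity of the norm then yields
\[
\|\vec{u}(t)\|_X\le\liminf_{n}\|\vec{u}_n(t)\|_X\le C_1\exp(C(M_1)|t|).
\]
Setting $C=\max\{C_1,C(M_1)\}$ gives the claim for every $t$.

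An alternative avoiding subsequences would be the following. For $\phi$ in a dense set, pair $A\,u_n(t)$ with $\phi$, move $A$ to $\phi$, pass to the limit using $L^2$ convergence, and conclude via duality. This is the same argument in disguise.

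The only real obstacle is bookkeeping: checking that Proposition \ref{prop_4_1} indeed holds for $t<0$ and uniformly in $n$. The estimate there controls $F(\vec{u}_n)$ rather than $\|\vec{u}_n\|_X^2$ directly. Via \eqref{eq_4_6}, $F$ is equivalent to $\|\vec{u}_n\|_X^2$ up to $C(M_1)$, and $\|\Delta u\|_{L^2}$ controls $\|Au\|_{L^2}$. At $t=0$ we need $F(J_n\vec{u}_0)\le C(\|\vec{u}_0\|_X)$; this follows from the right-hand inequality of \eqref{eq_4_6} together with the bound on $\|J_n\vec{u}_0\|_X$ above.
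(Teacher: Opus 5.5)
Your proposal is correct and follows essentially the same route as the paper: extract a weakly convergent subsequence of the uniformly bounded sequence $\vec{u}_n(t)$ in $X$ (the paper does this componentwise in $D(A)$, $D(A^{3/4})$, $D(A^{3/4})$), identify the weak limit with $\vec{u}(t)$ via the strong $L^2$ convergence, and conclude by weak lower semicontinuity of the norm together with Proposition \ref{prop_4_1}. Your additional bookkeeping (uniformity in $n$ via $\|J_n\vec{u}_0\|_X\le\|\vec{u}_0\|_X$, and negative times via time reversibility) fills in details that the paper leaves implicit.
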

\begin{proof}
Since $(u_n(t))_{n\in\mathbb{N}}$ is a bounded sequence in $D(A)$, there exists a subsequence $(u_{n_j}(t))_{j}\subset (u_n(t))_{n}$ and $u^*(t)\in D(A)$ such that
\begin{align}
&u_{n_j}(t)\rightharpoonup u^*(t)\quad \text{weakly in}\ D(A),\\
&\|u^*(t)\|_{H^2}\le \liminf_{j\to\infty}\|u_{n_j}(t)\|_{H^2}\le M_2(t),
\end{align}
where the weak limit $u^*(t)$ coincides with $u(t)$, since $(u_{n_j})_{j}$ converges to $u$ in $C([-T,T];L^2(\Omega))$. Similarly, we obtain the desired growth estimate with respect to $v, w$.
\end{proof}

\begin{proposition}\label{prop_5_3}
The function $\vec{u}$ obtained as the limit of the approximate solutions $\vec{u}_n$ satisfies the system \eqref{eq_1_3} and
\begin{align}
&u\in (L^\infty_{\mathrm{loc}}\cap C_w)(\mathbb{R};D(A))\cap C(\mathbb{R};H^1_0(\Omega))\cap C^1(\mathbb{R};H^{-1}(\Omega)),\\
&\partial_t u\in (L^\infty_{\mathrm{loc}}\cap C_w)(\mathbb{R};L^2(\Omega)),\ (v,w) \in C(\mathbb{R};D(A^{3/4})\times D(A^{3/4})).
\end{align}
\end{proposition}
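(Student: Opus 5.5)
We will pass to the limit in the Duhamel (integral) form of \eqref{eq:(1.3)}, using only the strong convergence $\vec{u}_n\to\vec{u}$ in $C([-T,T];H^s\times H^\sigma\times H^\sigma)$ for $(s,\sigma)\in(0,2)\times(0,3/2)$ and the uniform bound $\sup_n\sup_{|t|\le T}\|\vec{u}_n(t)\|_X\le M_2$ from Proposition \ref{prop_4_1}. Fix $T>0$ and write the equations for $\vec{u}_n$ in integrated form,
\begin{align}
u_n(t)=J_nu_0-i\int_0^t\bigl(Au_n+J_n(J_nv_n\cdot J_nu_n)\bigr)\,d\tau,\quad v_n(t)=J_nv_0-\int_0^tA^{1/2}w_n\,d\tau,
\end{align}
and similarly for $w_n$, with all identities understood in $H^{-1}(\Omega)$ for $u$ and in $D(A^{1/4})$ for $v,w$. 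By Lemma \ref{lem_2_1}(4), $J_n\vec{u}_0\to\vec{u}_0$ in $X$. The linear terms converge since $Au_n\to Au$ in $C([-T,T];H^{-1})$ (as $u_n\to u$ in $C([-T,T];H^1_0)$) and $A^{1/2}w_n\to A^{1/2}w$ in $C([-T,T];D(A^{1/4}))$.

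For the nonlinear terms I will split as in \eqref{eq_5_1} and \eqref{eq_5_5}. For instance,
\begin{align}
J_n(J_nv_n\cdot J_nu_n)-vu=J_n\bigl(J_n(v_n-v)J_nu_n\bigr)+J_n\bigl(J_nv\, J_n(u_n-u)\bigr)+J_n\bigl((J_n-1)v\,J_nu\bigr)+(J_n-1)(vu),
\end{align}
each piece being estimated in $L^2$ with $\|J_nu_n\|_{L^\infty}\le C\|u_n\|_{H^2}\le C(M_2)$ and Lemma \ref{lem_2_1}(3),(4). Pointwise convergence in time together with uniform bounds then lets dominated convergence pass the time integrals to the limit. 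The term $J_n(|J_nu_n|^2)\to|u|^2$ in $C([-T,T];D(A^{1/4}))$ is handled the same way, using the $H^1$ algebra-type bound $\|fg\|_{H^1}\le C\|f\|_{H^2}\|g\|_{H^1}$ and the fact that $|u|^2\in H^1_0$. The limit therefore satisfies the integral equations, hence \eqref{eq_1_3}. This gives $u\in C^1(\R;H^{-1})$ with $\partial_tu=-iAu-ivu$, and $(v,w)\in C^1(\R;D(A^{1/4}))$.

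The regularity statements come next. Proposition \ref{prop_5_2} gives $L^\infty_{\rm loc}(\R;X)$. Weak continuity in $D(A)$ follows from boundedness in $D(A)$ plus continuity in $L^2$: for $\phi$ in a dense set, $(Au(t),\phi)=(u(t),A\phi)$ is continuous, and density with the uniform bound extends this to all $\phi\in L^2$. Since $\partial_tu=-iAu-ivu$ with $vu\in C(\R;L^2)$ (because $v\in C(H^{\sigma})$ and $u\in C(H^s)$ by Sobolev), we get $\partial_tu\in (L^\infty_{\rm loc}\cap C_w)(\R;L^2)$.

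The main obstacle is strong continuity of $(v,w)$ in $D(A^{3/4})$, because the convergence is only known below this level. I will use the energy identity for the linear wave part. Define $G(t)=\frac12(\|A^{3/4}v\|^2+\|A^{3/4}w\|^2)$. From the computation in Proposition \ref{prop_4_1}, and after integrating in time, $G(t)=G(0)+\int_0^t(A^{1/2}|u|^2,\partial_tA^{1/2}\ldots)$; more safely, I will write Duhamel for the skew-adjoint group $e^{t\mathcal{L}}$ generated by the $(v,w)$ block, which is unitary on $D(A^{3/4})^2$:
\begin{align}
(v,w)(t)=e^{t\mathcal{L}}(v_0,w_0)+\int_0^te^{(t-\tau)\mathcal{L}}(0,|u|^2)\,d\tau.
\end{align}
The forcing $|u|^2$ lies in $L^\infty_{\rm loc}(\R;D(A^{3/4}))$: $u\in H^2\cap H^1_0$ gives $|u|^2\in H^2$ with vanishing trace, hence $|u|^2\in H^{3/2}\cap H^1_0=D(A^{3/4})$. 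It is also weakly continuous there, by the same density argument as for $u$. The integral of a bounded, strongly measurable (weakly continuous into a separable space) function against a strongly continuous group is strongly continuous in time, so $(v,w)\in C(\R;D(A^{3/4})^2)$. If identifying $|u|^2\in D(A^{3/4})$ is delicate near $\partial\Omega$, the fallback is to use forcing in $D(A^{1/2})=H^1_0$ and recover the missing quarter derivative from the structure of the equations, as in \eqref{eq_4_3}.
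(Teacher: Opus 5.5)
Your proposal is correct and follows essentially the paper's route. The key step is the Duhamel formula for $(v,w)$ with the unitary wave group and forcing $|u|^2\in L^\infty_{\mathrm{loc}}(\R;D(A^{3/4}))$. You also supply details the paper leaves implicit: passing to the limit in the $u$-equation, weak continuity of $u$ in $D(A)$ by density, and Pettis measurability of the forcing.

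One small slip needs fixing. $A^{1/2}w_n\to A^{1/2}w$ in $C([-T,T];D(A^{1/4}))$ would require convergence of $w_n$ in $D(A^{3/4})$, which you do not have. Pass to the limit in the $(v,w)$ integral identities in $L^2$ instead. The $C^1(\R;D(A^{1/4}))$ regularity then follows a posteriori from $(v,w)\in C(\R;D(A^{3/4})^2)$.
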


\begin{proof}
Let $\mathcal{T}(t)$ be the group generated by
\begin{equation}
\begin{pmatrix}
    0 & -A^{1/2} \\
    A^{1/2} & 0
\end{pmatrix},
\end{equation}
and define
\begin{equation}
N_n(\vec{u})=
\begin{pmatrix}
    0 \\
    J_n|J_n u|^2
\end{pmatrix},\quad
N(\vec{u})=
\begin{pmatrix}
    0 \\
    |u|^2
\end{pmatrix}.
\end{equation}
We set $\vec{V}_n(t)=(v_n(t),w_n(t))$. Then, $\vec{V}_n(t)$ satisfies the following integral equation
\begin{equation}
    \vec{V}_n(t)=\mathcal{T}(t)\vec{V}_n(0)+\int_{0}^{t}\mathcal{T}(t-s)N_n(\vec{u}_n(s))ds.
\end{equation}
Since the group $\mathcal{T}(t)$ is a unitary group in $L^2(\Omega)\times L^2(\Omega)$ and $\sup_{t\in[-T,T]}\|J_n|J_n u_n(t)|^2-|u(t)|^2\|_{L^2}\to 0$, it follows that $\vec{V}(t)=(v(t),w(t))$ satisfies

\begin{equation}
    \vec{V}(t)=\mathcal{T}(t)\vec{V}(0)+\int_{0}^{t}\mathcal{T}(t-s)N(\vec{u}(s))ds.
\end{equation}
Since we have $|u|^2\in L_{\mathrm{loc}}^{\infty}(\R;D(A^{3/4}))$ and the initial data $\vec{V}(0)\in D(A^{3/4})\times D(A^{3/4})$, we obtain $\vec{V}\in C(\R;D(A^{3/4})\times D(A^{3/4}))$.
\end{proof}

\begin{proposition}
The solutions of \eqref{eq_1_3} are unique with respect to the initial data.
\end{proposition}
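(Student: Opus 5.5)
Given two solutions $\vec{u}_1=(u_1,v_1,w_1)$ and $\vec{u}_2=(u_2,v_2,w_2)$ of \eqref{eq_1_3} with the same initial data, both in the regularity class of Proposition \ref{prop_5_3}, I will run the $L^2$ energy argument used for the Cauchy property of $(\vec{u}_n)$, now without the regularization $J_n$. Fix $T>0$. By Proposition \ref{prop_5_3}, $u_j\in L^\infty((-T,T);D(A))$, and in three dimensions $H^2(\Omega)\hookrightarrow L^\infty(\Omega)$. Hence
\[
K:=\sup_{|t|\le T}\bigl(\|u_1(t)\|_{L^\infty}+\|u_2(t)\|_{L^\infty}\bigr)<\infty.
\]
Set $u=u_1-u_2$, $v=v_1-v_2$, $w=w_1-w_2$. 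They satisfy
\[
i\partial_t u=Au+v u_1+v_2 u,\qquad \partial_t v=-A^{1/2}w,\qquad \partial_t w=A^{1/2}v+|u_1|^2-|u_2|^2 ,
\]
with zero initial data.

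For the Schr\"odinger part, $u\in C(\R;H_0^1)\cap C^1(\R;H^{-1})$, so $\frac{1}{2}\frac{d}{dt}\|u\|_{L^2}^2=\langle \partial_t u,u\rangle_{H^{-1},H_0^1}$ holds in the real pairing. Since $u(t)\in D(A)$ for a.e.\ $t$, the term $(-iAu,u)_{L^2}$ vanishes, and $(-iv_2u,u)_{L^2}=0$ because $v_2$ is real. This leaves
\[
\frac{1}{2}\frac{d}{dt}\|u\|_{L^2}^2=(-i v u_1,u)_{L^2}\le K\|v\|_{L^2}\|u\|_{L^2}.
\]

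For the wave part, $(v,w)\in C(\R;D(A^{3/4})^2)$ and the integral equation of Proposition \ref{prop_5_3} holds with the unitary group $\mathcal{T}(t)$. The skew terms cancel: $(-A^{1/2}w,v)+(A^{1/2}v,w)=0$. This gives
\[
\frac{1}{2}\frac{d}{dt}\bigl(\|v\|_{L^2}^2+\|w\|_{L^2}^2\bigr)=\bigl(|u_1|^2-|u_2|^2,w\bigr)_{L^2}\le K\|u\|_{L^2}\|w\|_{L^2},
\]
using $\bigl||u_1|^2-|u_2|^2\bigr|\le(|u_1|+|u_2|)|u|$. If differentiability is a concern, I would instead use the Duhamel form and unitarity of $\mathcal{T}$ to get
\[
\|(v,w)(t)\|_{L^2}\le K\int_0^{|t|}\|u(s)\|_{L^2}\,ds .
\]

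Adding the two estimates and applying Young's inequality gives
\[
\frac{d}{dt}\|\vec u(t)\|_{(L^2)^3}^2\le 2K\|\vec u(t)\|_{(L^2)^3}^2 .
\]
Gronwall's inequality with zero initial data then yields $\vec u\equiv 0$ on $[-T,T]$, and since $T$ is arbitrary, on all of $\R$.

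The main obstacle is justifying the energy identities at the available regularity, since the solution from Proposition \ref{prop_5_3} is only weakly continuous in $D(A)$. I would handle this through the $H^{-1}$--$H_0^1$ duality for $u$ (integrating the identity in time, which is legitimate because $u\in W^{1,\infty}(-T,T;H^{-1})\cap L^\infty(-T,T;H_0^1)$) and through the Duhamel formula for $(v,w)$. The crucial quantitative input is the $L^\infty$ bound on $u_j$, which comes from $D(A)\subset H^2$.
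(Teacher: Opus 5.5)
Your proposal is correct and follows the same route as the paper. The paper's proof only says that uniqueness follows ``in the same way as in the proof of the existence,'' meaning the $L^2$ difference estimate used to show $(\vec{u}_n)$ is Cauchy, closed by Gronwall with the $L^\infty$ bound on the Schr\"odinger component that comes from $D(A)\subset H^2$. You carry out exactly that argument without the $J_n$ terms, and you give more detail than the paper on justifying the energy identities at the available regularity.
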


\begin{proof}
It suffices to check $\vec{u}_1=\vec{u}_2$ where $\vec{u}_j=(u_j,v_j,w_j)\ (j=1,2)$ is a solution to \eqref{eq_1_3}. The uniqueness is proved in the same way as in the proof of the existence (see \cite{MR4305957}). 
\end{proof}

\begin{proposition}\label{prop_5_5}
The solution $u$ to \eqref{eq_1_3} is a strong solution in the sense that $u\in C(\R;D(A))$.
\end{proposition}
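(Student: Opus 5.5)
The plan is to upgrade the weak continuity $u\in C_w(\R;D(A))$ from Proposition \ref{prop_5_3} to strong continuity. We use the standard principle that weak continuity plus continuity of the norm gives strong continuity in a Hilbert space. The $D(A)$-norm is not conserved, so we work with the higher-order functional $F$ from Section 4 instead. For the limit solution, define
\begin{align*}
F(\vec{u}(t))=\|\partial_t u(t)\|_{L^2}^2+\frac12\left(\|A^{3/4}v(t)\|_{L^2}^2+\|A^{3/4}w(t)\|_{L^2}^2\right).
\end{align*}
By Proposition \ref{prop_5_3}, $(v,w)\in C(\R;D(A^{3/4})^2)$, so the last two terms are continuous in $t$. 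It therefore suffices to show that $t\mapsto\|\partial_t u(t)\|_{L^2}^2$ is continuous. Combined with $\partial_t u\in C_w(\R;L^2)$, this gives $\partial_t u\in C(\R;L^2)$.

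\textbf{Step 1: an integral identity for $F$ along the limit.} From \eqref{eq_4_1}, for each $n$,
\begin{align*}
F(\vec{u}_n(t))=F(\vec{u}_n(0))+\int_0^t G_n(s)\,ds,
\end{align*}
where
\begin{align*}
G_n=2(\partial_t J_nv_n,|\nabla J_nu_n|^2)_{L^2}+2(\partial_t J_nv_n\cdot J_nu_n,J_n^2(J_nv_n\cdot J_nu_n))_{L^2}.
\end{align*}
We pass to the limit in three places.
\begin{itemize}
\item[(a)] The integrands $G_n$ converge to
\begin{align*}
G=2(-A^{1/2}w,|\nabla u|^2)+2(-A^{1/2}w\cdot u,vu)
\end{align*}
uniformly on $[-T,T]$. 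This uses that $\partial_tJ_nv_n=-A^{1/2}J_nw_n$ converges in $C([-T,T];L^2\cap L^{3-})$ by the $H^\sigma$ convergence with $\sigma<3/2$. It also uses that $\nabla J_nu_n\to\nabla u$ in $C([-T,T];L^{3})$ by the $H^s$ convergence with $s$ close to $2$ (which embeds into $W^{1,3}$ for $s\ge 3/2$). The uniform bound $M_2$ then dominates every factor.
\item[(b)] The data term converges: $F(\vec{u}_n(0))\to F(\vec{u}(0))$. The $v,w$ parts converge because $J_n\to I$ strongly on $D(A^{3/4})$. The $u$ part converges because $\partial_tu_n(0)=-iAJ_nu_0-iJ_n(J_n^2v_0\cdot J_n^2u_0)$ tends to $-iAu_0-iv_0u_0$ in $L^2$.
\item[(c)] The left side converges pointwise in $t$: $F(\vec{u}_n(t))\to F(\vec{u}(t))$.
\end{itemize}
Together these give $F(\vec u(t))=F(\vec u(0))+\int_0^tG$, with $G$ continuous. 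Hence $F(\vec{u}(t))$ is continuous.

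\textbf{Step 2: the main obstacle, item (c).} The quantity $\partial_tu_n(t)=-iAu_n-iJ_n(J_nv_nJ_nu_n)$ is only known to be bounded in $L^2$. To avoid circularity, I would use the energy-type decomposition \eqref{eq_4_2}:
\begin{align*}
\|\partial_tu_n\|^2=\|\Delta u_n\|^2+R_n,
\end{align*}
where the remainder $R_n$ converges by the strong $H^s\times H^\sigma$ convergence. The term $\|\Delta u_n(t)\|^2$ is genuinely only weakly convergent. I would handle it by uniqueness for the linear problem. First, $\partial_t u_n=:z_n$ solves $i\partial_t z_n=Az_n+J_n(\partial_t(J_nv_n)J_nu_n+J_nv_nJ_n z_n)$. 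Second, $z=\partial_tu$ solves the limiting linear Schrödinger equation with potential $v$ and source $\partial_tv\,u$. An $L^2$-energy estimate for $z_n-z$ then needs to control $\|z_n-z\|_{L^2}$. The difficult term is $(J_nv_nJ_nz_n-vz,\,z_n-z)$: its real part with the factor $i$ cancels in the symmetric part, leaving only commutator-type errors with $J_n$. Those errors vanish by Lemma \ref{lem_2_2} only if $z$ is smoother; here I would regularize $z$ by $J_k$ and let $k\to\infty$ after $n\to\infty$. This Gronwall argument should show $z_n\to z$ in $C([-T,T];L^2)$ directly, which also proves continuity of $\partial_tu$ without needing Step 1.

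\textbf{Step 3: conclusion.} With $\partial_tu\in C(\R;L^2)$, write $Au=i\partial_tu-vu$. Here $vu\in C(\R;L^2)$ since $v\in C(\R;L^3)$ and $u\in C(\R;L^6)$. Hence $Au\in C(\R;L^2)$. Since $0\in\rho(A)$, this gives $u\in C(\R;D(A))$.
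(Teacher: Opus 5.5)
\textbf{There is a genuine gap: item (c) of Step 1, and Step 2 does not supply it.}

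Your reduction is sound. Weak continuity of $\partial_t u$ plus continuity of $t\mapsto\|\partial_t u(t)\|_{L^2}$ gives $\partial_t u\in C(\R;L^2)$, and your Step 3 then yields $u\in C(\R;D(A))$. Items (a) and (b) of Step 1 are also fine.

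Item (c) asks for $F(\vec u_n(t))\to F(\vec u(t))$. After your decomposition this means $\|\Del u_n(t)\|_{L^2}\to\|\Del u(t)\|_{L^2}$, i.e.\ strong convergence $\partial_t u_n(t)\to\partial_t u(t)$ in $L^2$. That is a top-regularity convergence statement at least as hard as the proposition itself: the uniform-in-$t$ convergence your Gronwall argument would produce immediately gives $u\in C(\R;D(A))$. Step 2 only says the argument ``should'' work, and as sketched it does not close:
\begin{itemize}
\item[(i)] The limit $z=\partial_t u$ is only in $L^\infty_{\mathrm{loc}}(\R;L^2)\cap C_w(\R;L^2)$.
\item[(ii)] The potential $v(t)\in D(A^{3/4})$ lies in the borderline space $H^{3/2}\not\subset L^\infty$ in three dimensions. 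So $vz$ need not be in $L^2$, and $\int_\Omega v|z|^2\,dx$ is not controlled by $\|z\|_{L^2}$. The ``real part cancels'' step is therefore unavailable for the limit.
\item[(iii)] Error terms such as $((J_nv_n-v)z,\,z_n-z)_{L^2}$ cannot be bounded by $o(1)+C\|z_n-z\|_{L^2}^2$ from the available information. You only have $J_nv_n\to v$ in $H^\sigma$ with $\sigma<3/2$, hence in $L^p$ for $p<\infty$, while $z_n$ is bounded only in $L^2$.
\item[(iv)] In the Section~5 Cauchy estimate this loss was absorbed by $\|J_mu_m\|_{L^\infty}\le C(M_2)$. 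There is no analogue for $z_m=\partial_t u_m$.
\item[(v)] Inserting $J_k$ does not help. It produces commutator errors $J_k(vz)-vJ_kz$ that are not small for $z\in L^2$ when $v$ is neither bounded nor Lipschitz.
\end{itemize}

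\textbf{The missing idea: you never need (c); a one-sided inequality is enough.} At each fixed $t$, weakly in $L^2$,
\begin{itemize}
\item[(1)] $\partial_t u_n(t)=-iAu_n(t)-iJ_n(J_nv_nJ_nu_n)(t)\rightharpoonup\partial_t u(t)$;
\item[(2)] $A^{3/4}v_n(t)\rightharpoonup A^{3/4}v(t)$ and $A^{3/4}w_n(t)\rightharpoonup A^{3/4}w(t)$.
\end{itemize}
Combine weak lower semicontinuity, the uniform bound $\bigl|\frac{d}{dt}F(\vec u_n(t))\bigr|\le C(M_2)$ from Section~4, and your item (b):
\begin{align*}
F(\vec u(t))\le\liminf_{n\to\infty}F(\vec u_n(t))\le\lim_{n\to\infty}F(\vec u_n(0))+C|t|=F(\vec u(0))+C|t|.
\end{align*}
The $v,w$ parts of $F$ are continuous by Proposition~\ref{prop_5_3}, so this gives $\limsup_{t\to0}\|\partial_t u(t)\|_{L^2}\le\|\partial_t u(0)\|_{L^2}$. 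Together with weak continuity, this yields strong continuity of $\partial_t u$ at $t=0$. Restarting the construction at an arbitrary $t_0$ and invoking uniqueness gives continuity at every time, and your Step 3 concludes. This is the paper's argument.

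If you prefer to keep your Step 1, replace (c) by lower semicontinuity and restart at $t_0$. This gives $F(\vec u(t))\le F(\vec u(t_0))+\int_{t_0}^t G(s)\,ds$ for all $t,t_0$. Exchanging $t$ and $t_0$ turns this into the identity you wanted.
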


\begin{proof}
By the uniqueness, it suffices to prove the continuity at $t=0$. By the result in Proposition \ref{prop_5_3}, we have
\begin{align}
\partial_t u(t)\rightharpoonup \partial_t u(0)\quad \text{weakly in}\ L^2(\Omega),\quad \|\partial_t u(0)\|_{L^2}\le \liminf_{t\to 0}\|\partial_t u(t)\|_{L^2}.
\end{align}
The higher energy estimate \eqref{eq_4_9} implies that
\begin{align}
\frac{d}{dt}F(\vec{u}_n(t))\le C(M_2).\label{eq_5_9}
\end{align}
By \eqref{eq_5_9} and Proposition \ref{prop_5_3}, we obtain
\begin{align}
F(\vec{u}(t))\le \liminf_{n\to\infty}F(\vec{u}_n(t))
\le \liminf_{n\to\infty}F(\vec{u}_n(0))+C|t|\le F(\vec{u}(0))+C|t|,
\end{align}
where in the last inequality we have used the first equation in \eqref{eq_1_3} and Lemma \ref{lem_2_1}. Thus, taking $\limsup_{t\to 0}$ yields the desired result.
\end{proof}
The continuous dependence is verified by the same argument as in the proof of the uniqueness. This completes the proof of Theorem \ref{thm_1_1}.

\section*{Acknowledgments}
The authors would like to express their sincere gratitude to Professor Mamoru Okamoto and Professor Toru Nogayama for their valuable comments.

\bibliographystyle{siam}
\bibliography{references}

\end{document}